\documentclass{amsart}
\usepackage{amsmath,dsfont,amsfonts}
\usepackage{amssymb}
\usepackage{amsthm}
\usepackage{verbatim}

\newtheorem{teo}{Theorem}[section]
\newtheorem{prop}[teo]{Proposition}
\newtheorem{lm}[teo]{Lemma}
\newtheorem{coro}[teo]{Corollary}
\newtheorem{rem}[teo]{Remark}

\newcommand{\RR}{{\mathbb{R}}}

\newcommand{\der}{\partial}

\newcommand{\om}{\Omega}

\def\qed{\hfill$\square$\vspace{0.5cm}}    

\numberwithin{equation}{section}

\parindent 0cm


\begin{document}

\title[Differentiability of the Dirichlet to Neumann map]{Differentiability of the Dirichlet to Neumann map under movements of polygonal inclusions with an application to shape optimization}

\author[E.~Beretta et al.]{Elena~Beretta}
\address{ Dipartimento di Matematica ``Brioschi'',
Politecnico di Milano, Italy}
\email{elena.beretta@polimi.it}
\author[]{Elisa~Francini}
\address{Dipartimento di Matematica e Informatica ``U. Dini'',
Universit\`{a} di Firenze, Italy}
\email{elisa.francini@unifi.it}
 \author[]{Sergio~Vessella}
 \address{Dipartimento di Matematica e Informatica ``U. Dini'',
Universit\`{a} di Firenze, Italy}
\email{sergio.vessella@unifi.it}

\date{\today}

\keywords{}

\subjclass[2010]{35R30, 35J25, 49Q10, 49K20, 49N60}
\begin{abstract}
In this paper we derive rigorously the derivative of the Dirichlet to Neumann map and of the Neumann to Dirichlet map of the conductivity equation with respect to movements of vertices of triangular conductivity inclusions. We apply this result to formulate an optimization problem based on a shape derivative approach. 
\end{abstract}

\maketitle

\section{Introduction}
This paper contains a rigorous proof of the formula for the derivative of the normal derivative of the solution to the following boundary value problem
\begin{equation}\label{conductivity}
    \left\{\begin{array}{rcl}
             \textrm{ div }((1+(k-1)\chi_{T})\nabla u) & = & 0\mbox{ in }\om, \\
             u & = & f \mbox{ on }\der\om,
           \end{array}
    \right.
\end{equation}
with respect to affine movements of a triangular inclusion $T \subset \om$.

More precisely, let us fix two functions $f, g\in H^{1/2}(\der\om)$. Let  $P:=(P_1,P_2,P_3)\in \om\times\om\times\om$ represent the vertices of $T$. Let $F:\om\times\om\times\om\rightarrow \mathbb{R}$ denote the following function
$$
F(P)=\int_{\der\om}\frac{\der u}{\der \nu}g\,d\sigma.
$$
We prove that $F$ is differentiable and we derive rigorously an explicit formula of the differential. 
Moreover, the quantitative proof of our main result  allows us to infer differentiability of the Dirichlet to Neumann map with respect to motions of the vertices of the triangle $T$ (see Corollary \ref{differenziabilita}). An analogous result is established for the Neumann to Dirichlet map.

We want to highlight that the derived formula can be cast in the framework of shape derivatives, a tool, which has been widely used in optimization problems, (see \cite{ADK1}, \cite{ADK2}, \cite{ABKFL}, \cite{AKLZ}, \cite{ADKSTVZ}, \cite{C}, \cite{HR}). Surprisingly, to our knowledge, it does not exist in the literature a rigorous proof  in the case of polygonal inclusions. 
In fact, the formulas for shape derivatives obtained in, for example  \cite{ADK1},  \cite{HR}, using a variational approach, require at least $C^{1,1}$ regularity of the boundary of the inclusion. Hence, we think it is important to present a rigorous proof of the formula for the derivative of $F$ in the case of polygonal inclusions. 
We present the result in the case of a triangular inclusion since this allows us to face the main difficulties of the problem without burdening the presentation with excessive notation. (see Remark  \ref{2.2}). 

Our interest in the derivative of $F$ is motivated by the following inverse problem:   
let $\{T_j\}_{j=1}^N$ be an unknown triangular mesh of $\Omega$ and let 
\[
\sigma=\sum_{j=1}^N\sigma_j\chi_{T_j},
\]
we would like to recover the mesh from  Neumann boundary measurements of solutions of 
\begin{equation*}
    \left\{\begin{array}{rcl}
             \textrm{ div }(\sigma\nabla u) & = & 0\mbox{ in }\om, \\
             u & = & f \mbox{ on }\der\om.
           \end{array}
    \right.
\end{equation*}
A similar inverse problem was considered  in \cite{BFdeHV}. There, the authors considered the Helmholtz equation at fixed low frequency in a bounded three-dimensional domain. Assuming  the wavespeed piecewise constant on an unknown regular tetrahedral partition  of  $\Omega$  they established a quantitative   of the Hausdorff distance between partitions in terms of  the norm of the difference of the corresponding Dirichlet to Neumann maps. One of the main ingredients in it is differentiability of the Dirichlet to Neumann map with respect to motions of the mesh.
In order to extend the stability result derived in \cite{BFdeHV} to the case of piecewise constant conductivities in dimension two a crucial step concerns the differentiability of the Dirichlet to Neumann map with respect to motions of vertices of the mesh $\{T^j\}_{j=1}^N$, $N\geq 2$. It is worthwhile to notice that this case compared to the Helmholtz equation is much more difficult since solutions are less regular. In particular,  the gradient of solutions may blow up at vertices of the mesh. 

The main tools to establish our result are energy estimates, fine elliptic regularity results for solutions to transmission problems for elliptic systems and equations obtained in \cite{LN}, \cite{LV} and the study of the exact asymptotic behaviour of gradients of solutions in a neighborhood of vertices (see \cite{BFI}). 
As we mentioned above the result is important in the context of  shape optimization (see \cite{SZ}, \cite{DZ}), since it generalizes the computation of the shape derivative to a class of non smooth inclusions. Furthermore,  in Section \ref{section4} we  describe how the result can be used to formulate a reconstruction algorithm for the inverse problem of recovering a polygonal inclusion from boundary measurements based on minimization of a suitable boundary functional. A similar approach has been used in \cite{ADK1}, \cite{ADKSTVZ} and \cite{HR} in the case of smooth inclusions.

The plan of the paper is the following: in Section \ref{section1} we present our main assumptions and the main results. In Section \ref{section2} we state some preliminary results concerning the local behaviour of solutions to the conductivity equation at the interface of the triangle. Based on these results we prove some energy estimates and some pointwise estimates of the gradient of the difference between the solutions of the conductivity equation corresponding to a fixed triangle $T$ and a perturbation of such a solution under a small  motion of the vertices of the triangle $T$, $T^t$. In Section \ref{section3} we give the proof of our main result that we split in several steps: first we derive the formula of the derivative under suitable movements that move only one vertex of the triangle.  Then, we extend the validity of the formula to arbitrary movements of one vertex. Finally, we show the general result by superposition of three displacements. 
As mentioned above, in the final Section we present an application to shape optimization.
\section{Notations, assumptions and main result}\label{section1}
\begin{center}
\textit{Main Assumptions}
\end{center}
Let $\om$ be a bounded domain in $\RR^2$ and $\partial\Omega\in C^{0,1}$
\begin{equation*}
\textrm{diam\,}\om\leq L ,
\end{equation*}
Let $d_0>0$ be such that
\begin{equation*}
\om_{d_0}=\{(x,y)\in\om: \textrm{dist}((x,y),\der\om)\geq d_0\}
\end{equation*}
is a nonempty subset of $\om$. Let $T^P$ denote an open triangle of vertices $P=(P_1,P_2,P_3)$ where $P$ belongs to the following subset of $\RR^2\times\RR^2\times\RR^2$
\begin{equation*}
\mathcal{V}=\left\{P\in \om_{d_0}\times\om_{d_0}\times\om_{d_0}:|P_i-P_j|\geq \alpha_1, \theta_i\geq \alpha_0,\, i,j=1,2,3\right\}
\end{equation*}
where $\theta_i$ denotes the width of the angle at the vertex $P_i$ and $\alpha_0,\alpha_1$ are positive given constants.
Let, for $0<k$, $k\neq 1$
\begin{equation*}
\sigma_P=1+(k-1)\chi_{T^P}\quad \textrm{in }\Omega.
\end{equation*}
Given $f\in H^{1/2}\left(\der\om\right)$ we denote by $u_P\in H^1(\om)$ the unique weak solution to 
\begin{equation}\label{probP}
    \left\{\begin{array}{rcl}
             \textrm{ div }(\sigma_P\nabla u_P) & = & 0\mbox{ in }\om, \\
             u_P & = & f \mbox{ on }\der\om.
           \end{array}
    \right.
\end{equation}
Define the Dirichlet to Neumann map
\begin{equation*}
    \Lambda_{\sigma_P}: H^{1/2}\left(\der\om\right)\to H^{-1/2}\left(\der\om\right)
\end{equation*}
as follows: given $f \in H^{1/2}\left(\der\om\right)$,  
\begin{equation*}
    \Lambda_{\sigma_P}(f):={\frac{\der u_P}{\der \nu}}\in  H^{-1/2}\left(\der\om\right),
\end{equation*}
where $\nu$ is the unit outer normal to $\der\om$.

For $f,g \in H^{1/2}\left(\der\om\right)$ let us define $F:\mathcal{V}\rightarrow \RR$ as follows
\[
F(P)=< \Lambda_{\sigma_P}(f),g>\quad\forall P\in \mathcal{V}
\]
where $<\cdot,\cdot>$ is the duality pairing between $ H^{1/2}\left(\der\om\right)$ and its dual   $ H^{-1/2}\left(\der\om\right)$. Consider now $P^0=(P_1^0,P_2^0,P_3^0) \in \mathcal{V}$ and the corresponding triangle  denoted by $T^0:=T^{P^0}$. Let $\vec V=(\vec V_1,\vec V_2,\vec V_3)\in \RR^2\times\RR^2\times\RR^2$ be an arbitrary  vector and let $\Phi_0^{\vec V}: \RR^2\rightarrow\RR^2$ be the affine map such that
\begin{equation*}
\Phi_0^{\vec V}(P_i^0)=\vec V_i,\,\,\,\, i=1,2,3.
\end{equation*}
Consider now $P^t=P^0+t\vec V=(P_1^t,P_2^t,P_3^t)$ for $t$ sufficiently small (for example  $|t|<d_0/(2|\vec V|)$) and the triangle of vertices $P^t$ that we denote by $T^t:=T^{P^t}$ and let 
\[G(t):=F(P^t).\] 
Then, let $u_0:=u_{P^0}$ the solution of (\ref{probP}) corresponding to 
\[\sigma_0:=\sigma_{P^0}=1+(k-1)\chi_{T^0},\] denote by $v_0$ the solution of 
\begin{equation*}
    \left\{\begin{array}{rcl}
             \textrm{ div }(\sigma_0\nabla v_0) & = & 0\mbox{ in }\om, \\
              v_0& = & g \mbox{ on }\der\om.
           \end{array}
    \right.
\end{equation*}
Finally, denote by $u_t:=u_{P^t}$ the solution of (\ref{probP}) corresponding to \[\sigma_t:=\sigma_{P^t}=1+(k-1)\chi_{T^t}\] and by $v_t$ the solution to 
\begin{equation}\label{probPt}
    \left\{\begin{array}{rcl}
             \textrm{ div }(\sigma_t\nabla v_t) & = & 0\mbox{ in }\om, \\
              v_t& = & g \mbox{ on }\der\om.
           \end{array}
    \right.
\end{equation}
Let us define 
\[
u_0^e=u_0|_{\om\backslash T^0},\,\,\,\, v_0^e=v_0|_{\om\backslash T^0}.
\]
 Fix an orthonormal system $(\tau_0, n_0)$ in such  a way that $n_0$ represents the outward unit normal to $\der T^0\backslash P^0$, the tangent unit vector $\tau_0$ is oriented clockwise and denote by $M_0 $ a $2\times 2$ matrix valued function defined on $\partial T^0$ with eigenvalues $1$ and $\frac{1}{k}$ and corresponding eigenvectors $\tau_0$ and $n_0$.

Our main result is the following 
\begin{teo}\label{MainResult} There exist two positive constants $C$ and $\alpha$ depending only on
 $L$, $ d_0$, $\alpha_0$, $\alpha_1$, $k$, such that
\begin{equation}\label{shapederivative}
\begin{aligned}
\left|G(t)-G(0)-\vphantom{\int_{\der T^0}}\right.&\left.t(k-1)\int_{\der T^0}(M_0\nabla u_0^e\cdot\nabla v_0^e)(\Phi_0^{\vec V}\cdot n_0)\,d\sigma\right|\\
&\leq Ct^{1+\alpha}\|f\|_{H^{1/2}(\der\om)}\|g\|_{H^{1/2}(\der\om)},
\end{aligned}
\end{equation}
for $|t|<d_0/(2|\vec V|)$.
\end{teo}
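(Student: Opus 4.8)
The plan is to reduce $G(t)-G(0)$ to an integral over the thin symmetric difference $T^t\triangle T^0$ via the energy identity, and then to extract the boundary term by a foliation argument combined with the transmission conditions. Since $v_0=g$ on $\der\om$ and $\dive(\sigma_t\nabla u_t)=0$, an integration by parts gives $G(t)=<\Lambda_{\sigma_t}f,g>=\int_\om\sigma_t\nabla u_t\cdot\nabla v_0\,dx$, and likewise $G(0)=\int_\om\sigma_0\nabla u_0\cdot\nabla v_0\,dx$. Subtracting, adding and subtracting $\sigma_0\nabla u_t$, and using that $u_t-u_0=0$ on $\der\om$ while $v_0$ is $\sigma_0$-harmonic (so that $\int_\om\sigma_0\nabla(u_t-u_0)\cdot\nabla v_0\,dx=0$), I obtain the exact identity
\[
G(t)-G(0)=(k-1)\int_{\om}(\chi_{T^t}-\chi_{T^0})\,\nabla u_t\cdot\nabla v_0\,dx=(k-1)\Big(\int_{T^t\setminus T^0}-\int_{T^0\setminus T^t}\Big)\nabla u_t\cdot\nabla v_0\,dx .
\]
This confines the whole analysis to a sliver of width $O(t)$ around $\der T^0$ and is the backbone of the argument.

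The second step is to identify the limiting density of this sliver integral. The subtle point is that one may not simply replace $u_t$ by $u_0$ pointwise: on $T^t\setminus T^0$ the points lie inside $T^t$, so $u_t$ is there of interior type and $\nabla u_t$ tends to the \emph{interior} trace $\nabla u_0^i:=\nabla(u_0|_{T^0})$ on $\der T^0$, although $u_0$ itself is exterior there; symmetrically, on $T^0\setminus T^t$ the gradient of $u_t$ tends to the exterior trace $\nabla u_0^e$. For the fixed function $v_0$ no such care is needed, its one-sided traces $\nabla v_0^e$ and $\nabla v_0^i$ being simply the values from outside and inside $T^0$. Controlling the deviation of $\nabla u_t$ from the correct one-sided value of $\nabla u_0$ is exactly where the energy estimates and pointwise gradient bounds of Section \ref{section2} enter, and I expect this deviation to contribute only $O(t^{1+\alpha})$. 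Granting this, foliating the sliver by the level sets of the signed distance to $\der T^0$ shows that its local signed width is $t\,(\Phi_0^{\vec V}\cdot n_0)$, so the volume integral becomes, to leading order,
\[
t\,(k-1)\int_{\der T^0}\big(M_0\nabla u_0^e\cdot\nabla v_0^e\big)\,(\Phi_0^{\vec V}\cdot n_0)\,d\sigma .
\]
Here the two pieces of the sliver yield the \emph{same} contribution: decomposing in the frame $(\tau_0,n_0)$, the tangential continuity $\der_{\tau_0}u_0^i=\der_{\tau_0}u_0^e$ and the flux continuity $k\,\der_{n_0}u_0^i=\der_{n_0}u_0^e$ give $\nabla u_0^i=M_0\nabla u_0^e$, whence $\nabla u_0^i\cdot\nabla v_0^e=\nabla u_0^e\cdot\nabla v_0^i=M_0\nabla u_0^e\cdot\nabla v_0^e$, which is precisely the claimed density.

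I expect the main obstacle to be the blow-up of $\nabla u_0$ and $\nabla v_0$ at the vertices $P_i^0$, which is exactly where the sliver degenerates and where both the boundary integral and the remainder estimates become delicate. Away from the vertices the gradients are H\"older continuous up to $\der T^0$ from either side by the transmission regularity of \cite{LN}, \cite{LV}, so the foliation and the replacement of $\nabla u_t$ by the one-sided traces of $\nabla u_0$ are routine and produce errors of order $t^{1+\alpha}$. Near each vertex I would instead invoke the exact asymptotics of \cite{BFI}, of the form $|\nabla u_0(x)|\lesssim r^{\gamma-1}$ with an exponent $\gamma>1/2$ guaranteed, under the angle bound $\theta_i\ge\alpha_0$, by that analysis; this simultaneously makes $M_0\nabla u_0^e\cdot\nabla v_0^e$ integrable on $\der T^0$ and lets me bound the vertex contribution to the sliver integral by $\int r^{2(\gamma-1)}$ over a corner region of size $O(t)$, which produces the power $t^{1+\alpha}$ with $\alpha$ fixed by $\gamma$. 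To keep the corner geometry manageable I would follow the route announced in the introduction: prove the formula first for displacements moving a single vertex along distinguished directions, then for an arbitrary displacement of one vertex, and finally recover the general affine motion $\Phi_0^{\vec V}$ by superposing the three single-vertex results and summing the three $O(t^{1+\alpha})$ errors.
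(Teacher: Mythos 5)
Your backbone identity (Alessandrini's identity $G(t)-G(0)=(k-1)\int_\om(\chi_{T^t}-\chi_{T^0})\nabla u_t\cdot\nabla v_0$), the sliver analysis with the correct one-sided traces, the transmission-condition algebra $\nabla u_0^i=M_0\nabla u_0^e$, and the division of labor between the Li--Nirenberg/Li--Vogelius regularity away from vertices and the Bellout--Friedman--Isakov asymptotics at the vertices all coincide with the paper's proof of Lemma \ref{sidederivative}. The genuine gap is in your final superposition step. The decomposition into single-vertex motions is \emph{sequential}: $F(P^t)-F(P^0)=[F(P^t)-F(\tilde P^t)]+[F(\tilde P^t)-F(\overline P^t)]+[F(\overline P^t)-F(P^0)]$, and only the last increment is a displacement based at $T^0$; the other two start from intermediate, $t$-dependent triangles (the same issue already appears one stage earlier, when an arbitrary motion of one vertex is split into two side-direction motions, the second of which starts at the intermediate triangle $\overline T^t$). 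Applying your side-direction result to those increments produces boundary integrals over $\der\overline T^t$ and $\der\tilde T^t$ involving the solutions attached to \emph{those} triangles, not the claimed integral over $\der T^0$ with $u_0,v_0$. To convert them, and to keep an $O(t^{1+\alpha})$ remainder, one needs a quantitative stability estimate for the shape-derivative functional, $|D(t)-D(0)|\le Ct^{\beta}$ with $D(t)=\int_{\der T^t}M_t\nabla u_t^e\cdot\nabla v_t^e(\Phi_t^{\vec V}\cdot n_t)\,d\sigma$: this is exactly the paper's Lemma \ref{continuity}, a separate and substantial estimate (it again requires the energy estimates, the pointwise gradient convergence, and the excision of $\delta_t$-neighborhoods of the vertices). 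Without it, what you have proved is only the existence of directional/partial derivatives along special directions, which, as in elementary calculus, does not by itself yield differentiability along the composed motion, let alone the rate $t^{1+\alpha}$; "summing the three errors" is not mere linearity.

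A secondary, quantitative point: you propose to cut off the vertex contribution on a corner region of size $O(t)$, giving $\int_0^{Ct}r^{2(\omega-1)}r\,dr\sim t^{2\omega}$. That bound is fine for the term estimated purely by the BFI inequalities, but the other half of your argument, the replacement of $\nabla u_t$ by the one-sided traces of $\nabla u_0$, cannot be carried out up to distance $O(t)$ from the vertices: the pointwise convergence of Proposition \ref{pointwisegrad} holds only outside balls of radius $\delta_t=t^{\beta_1}$ with $\beta_1<1$, and its rate deteriorates as those balls shrink. The vertex cutoff must therefore be taken at the larger scale $t^{\beta_1}$ and balanced against the replacement error, as the paper does when choosing $\delta_t$ and obtaining $\alpha$ from $\min\bigl(\beta_1,(2\omega-1)\beta_1,(1-\tfrac{\theta}{2})\gamma\bigr)$. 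This only changes the value of the exponent $\alpha$, not the structure of the argument, but your bookkeeping (with $\alpha$ determined by $\omega$ alone through a corner of size $O(t)$) would not survive the coupling between the two error sources.
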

An obvious consequence of Theorem \ref{MainResult} is:
\begin{coro} \label{remnuovo1}For given $f$ and $g$ in $H^{1/2}(\der\om)$, $G$ is differentiable and
\begin{equation}\label{shapederivativebis}
G^\prime(0)=(k-1)\int_{\der T^0}(M_0\nabla u_0^e\cdot\nabla v_0^e)(\Phi_0^{\vec V}\cdot n_0)\,d\sigma.
\end{equation}
\end{coro}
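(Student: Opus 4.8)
The plan is to read off Corollary \ref{remnuovo1} directly from the quantitative estimate \eqref{shapederivative}, so that essentially all of the analytic work is already carried by Theorem \ref{MainResult}. The key observation is that, for $t\neq 0$, the quantity inside the absolute value in \eqref{shapederivative} is exactly $t$ times
\[
\frac{G(t)-G(0)}{t}-(k-1)\int_{\der T^0}(M_0\nabla u_0^e\cdot\nabla v_0^e)(\Phi_0^{\vec V}\cdot n_0)\,d\sigma .
\]
Denote by $A$ the integral appearing on the right-hand side of \eqref{shapederivativebis}. I would first note that $A$ is a fixed real number, independent of $t$, since $u_0^e$, $v_0^e$, $M_0$, $n_0$ and $\Phi_0^{\vec V}$ are all determined by the unperturbed configuration $P^0$ and the direction $\vec V$ alone.

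Next I would fix $\vec V$ and restrict to $0<|t|<d_0/(2|\vec V|)$, the range in which Theorem \ref{MainResult} applies. Dividing \eqref{shapederivative} by $|t|$ gives
\[
\left|\frac{G(t)-G(0)}{t}-A\right|\leq C|t|^{\alpha}\,\|f\|_{H^{1/2}(\der\om)}\|g\|_{H^{1/2}(\der\om)} .
\]
Since $\alpha>0$, while $C$, $\|f\|_{H^{1/2}(\der\om)}$ and $\|g\|_{H^{1/2}(\der\om)}$ are independent of $t$, the right-hand side tends to $0$ as $t\to 0$. Hence the difference quotient of $G$ at the origin converges to $A$, which is precisely the statement that $G$ is differentiable at $t=0$ with $G^\prime(0)=A$, i.e. \eqref{shapederivativebis}.

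I do not expect any genuine obstacle at this stage: the argument is a one-line passage to the limit. The only point worth emphasizing is that the H\"older-type remainder $Ct^{1+\alpha}$ in Theorem \ref{MainResult} is strictly stronger than what the corollary needs — a mere $o(t)$ bound on the remainder would already yield differentiability — so the corollary exploits \eqref{shapederivative} only through the qualitative fact that the error is $o(t)$. Finally, since $\vec V$ is arbitrary, repeating the computation along every straight path $P^t=P^0+t\vec V$ identifies all directional derivatives of $F$ at $P^0$; combined with the linear dependence of $\Phi_0^{\vec V}$ on $\vec V$, which makes $\vec V\mapsto G^\prime(0)$ linear, this justifies interpreting \eqref{shapederivativebis} as a genuine (shape) differential.
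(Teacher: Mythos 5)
Your proof is correct and is exactly the argument the paper has in mind: the paper states Corollary \ref{remnuovo1} as an ``obvious consequence'' of Theorem \ref{MainResult}, namely dividing \eqref{shapederivative} by $|t|$ and letting $t\to 0$, which is precisely your passage to the limit. Your closing remarks (that only an $o(t)$ remainder is needed, and that linearity of $\vec V\mapsto \Phi_0^{\vec V}$ underlies the interpretation as a differential) are accurate and consistent with how the paper later uses the result in Corollary \ref{differenziabilita}.
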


\vskip 2truemm
\begin{rem}\label{2.2}
We want to point out that Theorem \ref{MainResult} extends to the case of a polygonal inclusion. 
Let $\mathcal{T}^{0}$ be a polygon that, for simplicity, we assume it is convex,
 of $N$ vertices, $P^0=(P_1^0, \dots, P_N^0)$ ordered clockwise.
Assume that
\[
\mathcal{T}^{0}\subset \Omega_{d_0},
\]
there exists $\alpha_0\in (0,\pi/2)$ such that, denoting by $\theta_i$ the interior angle  at vertex $P_i$,  
\[
\alpha_0\leq\theta_i\leq\pi-\alpha_0, \,\,\,\forall i=1,\dots, N.
\]

Assume also there exists $\alpha_1>0$ such that
\[
|P_{i+1}-P_i|>\alpha_1,\,\,\forall i=1,\dots,N,
\]
where we set $P_{N+1}:=P_1$.

Denote by $P^t=P^0+t\vec V$ where $\vec V=(\vec V_1,\dots,\vec V_N)$, $\vec V_i\in\mathbb{R}^2$ for all $i=1,\dots,N$ and $t\in\mathbb{R}$ is sufficiently small. Let  $\mathcal{T}^t$ be the polyon of vertices $P^t$ and define
\[
G(t)=< \Lambda_{\sigma_t}(f),g>
\]
where $f,g \in H^{1/2}\left(\der\om\right)$ and $\Lambda_{\sigma_t}$ denotes the Dirichlet to Neumann map of the operator 
\[{ div }(\sigma_t\nabla \cdot)\]
 with  
 \[\sigma_t=1+(k-1)\chi_{\mathcal{T}^t}.\]

We have that $G$ is differentiable at $t=0$  and $G^\prime(0)$ can be expressed by the following formula
\[
G^\prime(0)=(k-1)\int_{\der \mathcal{T}^0}(M_0\nabla u_0^e\cdot\nabla v_0^e)(\Phi^{\vec V}\cdot n_0)d\sigma,
\]
where $\Phi^{\vec V}:\der\mathcal{T}^0\rightarrow \mathbb{R}^2$ and
\[
\Phi^{\vec V}(Q)=\vec V_i+\frac{(Q-P_i^0)\cdot (P^0_{i+1}-P^0_i)}{|P^0_{i+1}-P^0_i|^2}(\vec V_{i+1}-\vec V_i),\mbox{ for }
Q\mbox{ on the side } P^0_iP^0_{i+1}.
\]

\end{rem}

\section{Preliminary results}\label{section2}
In this section we collect some preliminary results some of them concerning the regularity of solutions to Problem (\ref{probP}) and that are crucial to prove our main theorem. Let $u_P$ be the solution to Problem (\ref{probP}) with $P=(P_1,P_2,P_3)\in\mathcal{V}$. Let 
\[u_P^e={u_P}_{|_{\om\backslash T^P}},\,\,\,\, u_P^i={u_P}_{|_{\overline{T}^P}}.\]
Finally, let $B(P_j,\delta)$ denote the ball centered at $P_j$ and radius $\delta>0$.
Then, the following estimates holds true
\begin{prop}\label{LiNirenberg}
There exist positive constants $C$ and $\gamma\in (0,1/4)$ and $\delta_0$ depending only on $\alpha_0$, $\alpha_1$, $k$ and $d_0$, such that, for any $\delta \in (0,\delta_0)$ we have
\begin{equation}\label{gradest1}
\|\nabla u_P^i\|_{C^{\gamma}(T^P\backslash \cup_{j=1}^3B(P_j,\delta))}\leq \frac{C}{\delta^{\gamma+1}}\|u\|_{L^2(\Omega)}
\end{equation}
\begin{equation}\label{gradest2}
\|\nabla u_P^e\|_{C^{\gamma}(\om_{d_0}\backslash(T^P\cup( \cup_{j=1}^3B(P_j,\delta)))}\leq \frac{C}{\delta^{\gamma+1}}\|u\|_{L^2(\Omega)}
\end{equation}
\end{prop}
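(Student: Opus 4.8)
The plan is to treat the two restrictions $u_P^i$ and $u_P^e$ separately and to exploit that, away from the three vertices, the interface $\der T^P$ consists of three \emph{straight} segments across which $\sigma_P$ is piecewise constant. Since $\sigma_P$ equals $k$ inside $T^P$ and $1$ outside, both $u_P^i$ and $u_P^e$ are harmonic in their respective regions and are coupled only through the transmission conditions (continuity of $u_P$ and of the co-normal flux $\sigma_P\der_n u_P$) on each flat side. On such a locally flat interface the problem falls exactly within the regularity framework of \cite{LN} and \cite{LV}: the restriction of the solution to each side of the interface is $C^{1,\gamma}$ up to the interface for a suitable $\gamma$ depending on $k$, and the corresponding fixed-scale estimate bounds the $C^\gamma$ norm of $\nabla u_P^i$ and of $\nabla u_P^e$ by the $L^2$ norm of the gradient on a slightly larger patch.

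First I would reduce the two global estimates to a local one by a covering argument. A point $x$ in $T^P\setminus\cup_{j}B(P_j,\delta)$ (respectively in $\om_{d_0}\setminus(T^P\cup\cup_j B(P_j,\delta))$) has distance at least $\delta$ from every vertex, so on a ball $B(x,c\rho)$ with $\rho$ comparable to $\mathrm{dist}(x,\{P_1,P_2,P_3\})\ge\delta$ and $c$ a fixed fraction depending on $\alpha_0$, the configuration is, after translation and rotation, either an interior ball lying on one side of the interface or a ball centered on a single flat segment. The lower bounds $\theta_i\ge\alpha_0$ and $|P_i-P_j|\ge\alpha_1$ guarantee that this local geometry is uniformly non-degenerate over $\mathcal{V}$: for $c$ small enough in terms of $\alpha_0$ the ball meets at most one side of the triangle, the two adjacent vertices lie outside it, and it stays inside $\om$.

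The $\delta$-dependence of the constant then comes from rescaling. Setting $\tilde u(y)=u_P(x+\rho y)$, the rescaled function solves the same harmonic transmission problem on a unit-scale domain with a flat interface, so the estimate from \cite{LN}, \cite{LV} gives $\|\nabla\tilde u\|_{C^\gamma}\le C\|\nabla\tilde u\|_{L^2}$ with $C$ uniform over the admissible class. Undoing the scaling via $\nabla u_P(x+\rho y)=\rho^{-1}\nabla\tilde u(y)$ produces a factor $\rho^{-1}$ from the gradient and a factor $\rho^{-\gamma}$ from the Hölder seminorm, while the scale invariance of the Dirichlet energy in two dimensions keeps $\|\nabla\tilde u\|_{L^2}$ equal to $\|\nabla u_P\|_{L^2}$ on the corresponding patch. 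Summing over a finite overlapping cover and using $\rho\ge\delta$ together with the Caccioppoli inequality (and the fact that points of $\om_{d_0}$ stay at distance $d_0$ from $\der\om$, which lets one bound the local Dirichlet energy by $C\|u\|_{L^2(\om)}$ with $C$ depending on $d_0$) yields the global bounds \eqref{gradest1} and \eqref{gradest2} with the weight $\delta^{-(\gamma+1)}$.

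The main obstacle will be proving that $C$ and $\gamma$ can be chosen uniformly over all $P\in\mathcal{V}$ and, in particular, that the estimate survives down to distance $\delta$ from the vertices. This is exactly where the fine asymptotic analysis of \cite{BFI} enters: near each vertex the gradient may genuinely blow up like $r^{\lambda-1}$, with a singular exponent $\lambda$ depending on the opening angle and on $k$, so the Hölder control must be taken strictly below the worst case. Choosing $\gamma<1/4$ keeps $\gamma$ below the infimum of the admissible singularity exponents over the class $\mathcal{V}$, which is what makes the $C^\gamma$ seminorm on the truncated domains finite and compatible with the stated power $\delta^{-(\gamma+1)}$. Verifying this uniform choice, rather than the routine interior and flat-interface estimates, is the delicate point of the argument.
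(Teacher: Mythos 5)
Your main argument --- localization to balls of radius comparable to the distance to the vertices, the uniform flat-interface $C^{1,\gamma}$ estimate of \cite{LN}, \cite{LV}, rescaling together with the scale invariance of the two-dimensional Dirichlet energy, and Caccioppoli to pass from the local gradient energy to $\|u\|_{L^2(\Omega)}$ --- is correct, and it is precisely what the paper means when it states that the proposition ``is a consequence of'' the results of Li--Nirenberg and Li--Vogelius; the paper offers no more detail than that citation. The weight $\delta^{-(\gamma+1)}$ comes out of your scaling computation exactly as you describe, and the uniformity over $\mathcal{V}$ is already secured in your covering step by the lower bounds $\theta_i\geq\alpha_0$, $|P_i-P_j|\geq\alpha_1$ and $\mathrm{dist}(\cdot,\partial\Omega)\geq d_0$, which make the local configurations (interior ball, or ball meeting a single flat segment) uniformly non-degenerate.

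Your final paragraph, however, misdiagnoses where the content of the proposition lies, and the mechanism you invoke there is not correct. The vertex asymptotics of \cite{BFI} play no role here: the paper states them as a separate result (Proposition \ref{BellFriedIs}) and uses them later to control integrals over the excised neighborhoods of the vertices, not to prove the present estimates. On the truncated sets the vertices have been removed, and your scaling argument already gives the bound down to distance $\delta$ with no information whatsoever about the singularity exponent; note that $\delta^{-(\gamma+1)}$ is in fact much cruder than the true near-vertex growth $\delta^{\omega-1}$ with $\omega>1/2$, so nothing sharp is being matched. Likewise, the claim that $\gamma<1/4$ must sit ``below the infimum of the admissible singularity exponents'' is not meaningful: $\nabla u_P$ is in general unbounded at the vertices, so no choice of $\gamma$ yields H\"older continuity of the gradient up to the vertices --- which is exactly why they are excised and why the constant degenerates as $\delta\to 0$. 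The exponent $\gamma$ is simply the flat-interface H\"older exponent furnished by \cite{LN}, \cite{LV} (depending on $k$), and confining it to $(0,1/4)$ is a harmless normalization, since a H\"older exponent can always be decreased. In short, there is no remaining ``delicate point'' of the kind you describe: your proof is already complete after your rescaling and covering step.
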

The proposition is consequence of a more general regularity result for elliptic systems due to Li and Nirenberg and to Li and Vogelius for elliptic equations  (cf. \cite{LN} and \cite{LV}). \\
Moreover, by a result of Bellout, Friedman and Isakov (cf. \cite{BFI}) it is possible to describe the exact behaviour of $u_P$ in a neighborhood of the vertices of $T^P$. In particular,  the following estimates hold true
\begin{prop}\label{BellFriedIs}
There exist a constant $\omega>\frac{1}{2}$ and two positive constants $\delta_0$ and  $C$ depending only on
$\alpha_0$, $\alpha_1$, $k$ and $d_0$  such that, for $j=1,2,3$
\begin{equation}\label{gradest3}
|\nabla u_P^i(x,y)|\leq C\|u\|_{L^2(\Omega)}\textrm{ dist }((x,y),P_j)^{\omega-1},
\end{equation}
\begin{equation}\label{gradest4}
|\nabla u_P^e(x,y)|\leq C\|u\|_{L^2(\Omega)}\textrm{ dist }((x,y),P_j)^{\omega-1},
\end{equation}
for all $(x,y)\in B(P_j,\delta_0)\backslash\{P_j\}$.
\end{prop}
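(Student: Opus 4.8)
The plan is to derive \eqref{gradest3}--\eqref{gradest4} from the classical corner-asymptotics theory (Kondrat'ev, Maz'ya--Plamenevskii) for transmission problems, localized separately at each vertex $P_j$. \textbf{Localization to a model wedge.} Because $T^P$ is a genuine triangle, for $\delta_0$ smaller than a fixed fraction of $\alpha_1$ only the two sides meeting at $P_j$ intersect $B(P_j,\delta_0)$, so inside this ball the interface $\der T^P$ is \emph{exactly} two straight rays issuing from $P_j$ enclosing the interior angle $\theta_j\in[\alpha_0,\pi-2\alpha_0]$. In polar coordinates $(r,\phi)$ centered at $P_j$ the equation $\dive(\sigma_P\nabla u_P)=0$ reduces, in each of the two constant-coefficient sectors, to Laplace's equation together with the transmission conditions (continuity of $u_P$ and of the flux $\sigma_P\der_n u_P$) across the two radial edges. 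This is the model transmission problem in an infinite wedge carrying conductivity $k$ inside the opening $\theta_j$ and $1$ outside.

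\textbf{Separation of variables and the asymptotic decomposition.} I would seek homogeneous solutions $r^{\lambda}\psi(\phi)$ of the homogeneous wedge problem. Harmonicity in each sector forces $\psi''+\lambda^2\psi=0$, hence $\psi=a\cos\lambda\phi+b\sin\lambda\phi$ with separate coefficients in the two sectors; imposing continuity of $\psi$ and of the flux ($k\psi'$ from inside against $\psi'$ from outside) at $\phi=0$ and $\phi=\theta_j$ yields a homogeneous $4\times4$ system, whose determinant gives a transcendental characteristic equation $D(\lambda;\theta_j,k)=0$. Its discrete roots are the admissible exponents $0=\lambda_0<\lambda_1\le\lambda_2\le\cdots$, and the Mellin (spectral) theory then provides, for $0<r<\delta_0$,
\begin{equation*}
u_P(r,\phi)=c_0+c_1\,r^{\lambda_1}\psi_1(\phi)+R(r,\phi),\qquad |\nabla R|\le C\,r^{\lambda_2-1},\quad \lambda_2>\lambda_1,
\end{equation*}
with no logarithmic terms as long as $\lambda_1$ is a simple non-integer root, which holds here. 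Setting $\omega:=\min\{\lambda_1,1\}$ and differentiating term by term gives $|\nabla u_P^i|,|\nabla u_P^e|\le C\,r^{\omega-1}$ in the respective sectors.

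\textbf{The threshold $\omega>1/2$ (main obstacle).} The crux is to show that the smallest positive root $\lambda_1$ stays strictly above $1/2$, uniformly over the admissible data. This is exactly where the convexity of the corners enters: since every triangle angle satisfies $\theta_j\le\pi-2\alpha_0<\pi$, each vertex is a \emph{reentrant} corner of opening $2\pi-\theta_j\in(\pi,2\pi)$ as seen from the exterior phase. The guiding observation is that in the two limiting contrasts $k\to\infty$ (Dirichlet on $\der T^P$) and $k\to0$ (Neumann on $\der T^P$) one has $\lambda_1=\pi/(2\pi-\theta_j)\in(1/2,1)$, which is $>1/2$ precisely because $\theta_j\ge\alpha_0>0$. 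For fixed $k$ I would then analyze $D(\,\cdot\,;\theta,k)=0$ on the compact set $\{\alpha_0\le\theta\le\pi-2\alpha_0\}$, show that $\lambda_1$ depends continuously on $\theta$, that $\lambda_1>1/2$ for each such $\theta$ directly from the explicit form of $D$ (with $\lambda_1\to1$ in the degenerate no-contrast limit $k\to1$), and conclude by compactness that $\omega:=\min_j\lambda_1(\theta_j,k)>1/2$, with $\omega$ depending only on $\alpha_0$ and $k$. Establishing this strict inequality from $D$ — rather than the automatic bound $\lambda_1>0$, which already yields $\nabla u_P\in L^2_{\mathrm{loc}}$ — is the delicate analytic point and, together with the rigorous justification of the Mellin decomposition, is the heart of the argument.

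\textbf{Quantitative $\|u\|_{L^2(\om)}$ dependence.} Finally I would make the estimate quantitative. Since the vertices lie in $\om_{d_0}$, the homogeneous equation and Caccioppoli's inequality on a chain of interior balls bound $\|\nabla u_P\|_{L^2(B(P_j,2\delta_0))}$, and hence the coefficients $c_0,c_1$ (each a bounded functional of $u_P$ via a Green's-formula pairing with a dual singular function) and the remainder norm, by $\|u_P\|_{L^2(\om)}$. A dilation argument on dyadic annuli $\{2^{-m-1}\delta_0\le r\le2^{-m}\delta_0\}$, combined with the interior $C^{\gamma}$ gradient bound of Proposition \ref{LiNirenberg} applied to the rescaled (scale-invariant) problem, then upgrades this $L^2$-control to the pointwise bound $|\nabla u_P|\le C\|u_P\|_{L^2(\om)}\,r^{\omega-1}$ on each annulus with a uniform constant. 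Summing the leading term and the remainder gives \eqref{gradest3}--\eqref{gradest4}.
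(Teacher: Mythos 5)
The paper offers no internal proof of this proposition: it is stated as a direct consequence of Bellout--Friedman--Isakov \cite{BFI}, so what you have written is a reconstruction of the cited result rather than an alternative to an argument in the text. Your route --- localization to the straight two-phase wedge, separation of variables, a characteristic determinant $D(\lambda;\theta,k)$, and a dyadic-rescaling argument for the quantitative $\|u\|_{L^2(\Omega)}$ dependence --- is exactly the classical proof of that result, and the skeleton is sound.

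Two points must be repaired before it is a proof. First, the pivotal inequality $\lambda_1>1/2$, which you rightly call the heart of the matter, is only asserted: the limiting cases $k\to 0,\infty$ giving $\pi/(2\pi-\theta)>1/2$ prove nothing at fixed $k$, and ``show it directly from the explicit form of $D$'' is precisely the content of the cited lemma. It closes in a few lines. The ansatz $r^{\lambda}\psi(\phi)$ in $\dive(a\nabla u)=0$ with $a=k\chi_{(0,\theta)}+\chi_{(\theta,2\pi)}$ yields the periodic Sturm--Liouville pencil $-(a\psi')'=\lambda^2 a\psi$ on the circle, which is self-adjoint with positive weight, so all exponents are \emph{real} --- a fact you use silently when you order the roots $0=\lambda_0<\lambda_1\le\cdots$, whereas a priori the Mellin symbol could have complex spectrum and one must exclude roots in the whole strip $0<\mathrm{Re}\,\lambda\le 1/2$. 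The determinant then reduces to
\begin{equation*}
\sin^2(\lambda\pi)=\Bigl(\frac{k-1}{k+1}\Bigr)^{2}\sin^2\bigl(\lambda(\pi-\theta)\bigr),
\end{equation*}
and for real $0<\lambda\le 1/2$ one has $0<\lambda(\pi-\theta)<\lambda\pi\le\pi/2$, so monotonicity of $\sin$ on $[0,\pi/2]$ and $|k-1|/(k+1)<1$ make the right side strictly smaller than the left; hence there are no roots there, $\lambda_1(\theta,k)>1/2$ for every $\theta\in(0,\pi)$, and continuity of $\lambda_1$ in $\theta$ on the compact set $[\alpha_0,\pi-2\alpha_0]$ gives a uniform $\omega>1/2$ depending only on $\alpha_0$ and $k$, as you claimed. (A sanity check: at $\theta=\pi/2$ this gives $\cos(\lambda\pi/2)=\pm\frac{|k-1|}{2(k+1)}$, so $\lambda_1\in(2/3,1)$, consistent with the known right-angle exponent.) Second, your expansion with remainder $|\nabla R|\le Cr^{\lambda_2-1}$ and the claim of ``no logarithmic terms since $\lambda_1$ is a simple non-integer root'' asserts simplicity without proof; it is also unnecessary. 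The proposition needs only the one-sided bound $|\nabla u_P|\le C\,r^{\omega-1}$, which follows from the weighted (Kondrat'ev) estimates, or from your dyadic-annuli argument applied to $u_P-u_P(P_j)$, for any fixed $\omega\in\bigl(1/2,\min\{\lambda_1,1\}\bigr)$, with no discussion of multiplicities or logarithmic resonances. With these two repairs your proposal becomes a correct, self-contained proof of the statement that the paper only cites.
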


\begin{rem}
In all the results that we state and prove from now on, the estimates depend linearly on the norms of the boundary data $f$ and $g$. For this reason, for sake of simplicity, we normalize functions $f$ and $g$ by taking
\[\|f\|_{H^{1/2}(\der\om)}=\|g\|_{H^{1/2}(\der\om)}=1\]
in the proofs, while we explicitly write the dependence on the norms in the statements.
\end{rem}

We can prove the following energy estimates
\begin{prop}\label{energy}
Let $P^0\in \mathcal{V}$ and $u_0$ be the solution of (\ref{probP}) corresponding to the triangle $T^0$ of vertices $P^0=(P^0_1,P^0_2,P^0_3)$. Let $u_t$ be the solution of (\ref{probP}) corresponding to the triangle $T^t$ of vertices $P^t=(P^t_1,P^t_2,P^t_3)$ with $P_i^t=P_i^0+t\vec V_i$, $i=1,2,3$ and $\vec V=(\vec V_1.\vec V_2, \vec V_3)$ a vector in $\RR^2\times\RR^2\times\RR^2$. Then, there exist  positive constants $C$ and $0<\theta<\frac{1}{2}$  independent of $t$, such that 
\begin{equation}\label{energyest}
\|u_t-u_0\|_{H^1(\om)}\leq C\|f\|_{H^{1/2}(\der\om)}t^{\theta},
\end{equation}
for $|t|<d_0/(2|\vec V|)$.
\end{prop}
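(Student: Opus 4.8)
The plan is to study the difference $w:=u_t-u_0$, which lies in $H^1_0(\om)$ since both solutions share the boundary datum $f$, and to derive an energy identity for it. Testing the weak formulations of the two equations against $w$ and subtracting, one writes $\sigma_t\nabla u_t-\sigma_0\nabla u_0=\sigma_t\nabla w+(\sigma_t-\sigma_0)\nabla u_0$ and obtains
\[
\int_\om \sigma_t|\nabla w|^2\,dx=-\int_\om(\sigma_t-\sigma_0)\nabla u_0\cdot\nabla w\,dx .
\]
Since $\sigma_t\ge\min\{1,k\}>0$, the left-hand side controls $\|\nabla w\|_{L^2(\om)}^2$ from below, while $\sigma_t-\sigma_0=(k-1)(\chi_{T^t}-\chi_{T^0})$ is supported on the symmetric difference $T^t\triangle T^0$. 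A Cauchy--Schwarz estimate on the right-hand side then yields
\[
\|\nabla w\|_{L^2(\om)}\le \frac{|k-1|}{\min\{1,k\}}\,\|\nabla u_0\|_{L^2(T^t\triangle T^0)} ,
\]
so the whole proposition reduces to bounding $\|\nabla u_0\|_{L^2(T^t\triangle T^0)}$ by $Ct^\theta$; the passage from $\|\nabla w\|_{L^2}$ to $\|w\|_{H^1}$ is then immediate by the Poincar\'e inequality on $H^1_0(\om)$.

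Geometrically, moving the vertices along $t\vec V$ displaces each side of the triangle by a quantity of order $t$, so $T^t\triangle T^0$ is contained in an $O(t)$-neighbourhood of $\der T^0$. The difficulty is that $\nabla u_0$ need not be bounded on this set: by Proposition \ref{BellFriedIs} it may blow up like $\mathrm{dist}(\cdot,P_j^0)^{\omega-1}$ near the vertices. I would therefore fix a radius $\rho$ (to be optimised, with $t\ll\rho\ll\delta_0$) and split
\[
\int_{T^t\triangle T^0}|\nabla u_0|^2
\le \sum_{j=1}^3\int_{B(P_j^0,\rho)}|\nabla u_0|^2
+\int_{(T^t\triangle T^0)\setminus\bigcup_j B(P_j^0,\rho)}|\nabla u_0|^2 .
\]

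On each vertex ball I would use the pointwise bounds \eqref{gradest3}--\eqref{gradest4}: since $\omega>\tfrac12$ one has $2\omega-2>-1$, so in two dimensions $\int_{B(P_j^0,\rho)}\mathrm{dist}(\cdot,P_j^0)^{2\omega-2}\,dx = C\rho^{2\omega}$, a contribution of $\le C\rho^{2\omega}$. On the complementary region the integrand is controlled by Proposition \ref{LiNirenberg}: every point there lies at distance $\ge\rho/2$ from the vertices (the strip has width $O(t)\ll\rho$), hence $|\nabla u_0^i|,|\nabla u_0^e|\le C\rho^{-(\gamma+1)}$, while the region itself has measure $O(t)$, giving $\le Ct\,\rho^{-2(\gamma+1)}$. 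Altogether
\[
\|\nabla u_0\|_{L^2(T^t\triangle T^0)}^2\le C\Big(\rho^{2\omega}+t\,\rho^{-2(\gamma+1)}\Big),
\]
and balancing the two terms with $\rho=t^{1/(2\omega+2\gamma+2)}$ produces $\|\nabla u_0\|_{L^2(T^t\triangle T^0)}^2\le Ct^{\,\omega/(\omega+\gamma+1)}$. This gives \eqref{energyest} with $\theta=\omega/\big(2(\omega+\gamma+1)\big)$, which satisfies $0<\theta<\tfrac12$ because $\gamma+1>0$. The linear dependence on $\|f\|_{H^{1/2}(\der\om)}$ is inherited from the $\|u\|_{L^2(\om)}$ factors in Propositions \ref{LiNirenberg} and \ref{BellFriedIs} together with well-posedness of (\ref{probP}).

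The crux of the argument, and the reason one cannot simply take $\theta=\tfrac12$, is precisely this competition near the vertices: a naive estimate using only $|T^t\triangle T^0|=O(t)$ and boundedness of $\nabla u_0$ fails because the gradient is singular there, and it is the trade-off between the singular vertex contribution $\rho^{2\omega}$ and the boundary-strip contribution $t\,\rho^{-2(\gamma+1)}$ that forces the loss of a fractional power and fixes the value of $\theta$. Making the geometric claims about $T^t\triangle T^0$ quantitative and uniform over $\mathcal V$ — the $O(t)$ strip width away from the vertices and the containment of the singular part inside the vertex balls $B(P_j^0,\rho)$ — is the principal technical point to verify.
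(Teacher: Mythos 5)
Your proposal is correct and follows essentially the same route as the paper: the same energy identity and Cauchy--Schwarz reduction to $\|\nabla u_0\|_{L^2(T^t\triangle T^0)}$, the same splitting into vertex balls (handled via Proposition \ref{BellFriedIs}) and the $O(t)$-measure complement (handled via the Li--Nirenberg bounds of Proposition \ref{LiNirenberg}), and the same balancing $\rho\sim t^{1/(2(\omega+\gamma+1))}$ yielding $\theta=\omega/\bigl(2(\omega+\gamma+1)\bigr)$. The only cosmetic difference is that you invoke Poincar\'e explicitly to pass from the gradient bound to the full $H^1$ norm, a step the paper leaves implicit.
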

\begin{proof}
Consider $w_t=u_t-u_0$. Then, $w_t$ solves
\begin{equation*}
    \left\{\begin{array}{rcl}
             \textrm{ div}(\sigma_t\nabla w_t) & = & \textrm{ div}((\sigma_0-\sigma_t)\nabla u_0) \mbox{ in }\om, \\
              w_t& = & 0 \mbox{ on }\der\om.
           \end{array}
    \right.
\end{equation*}
Multiplying the above equation by $w_t$, integrating over $\om$ and integrating by parts we get
\begin{equation}\label{energyest1}
\int_{\om}\sigma_t|\nabla w_t|^2=\int_{T^t \triangle T^0}(\sigma_0-\sigma_t)\nabla u_0\cdot\nabla w_t
\end{equation}
and by 
\[
\sigma_t\geq \min\{1,k\}
\]
we get
\begin{eqnarray*}
\int_{\om}|\nabla w_t|^2&\leq& \frac{|k-1|}{\min\{1,k\}}\int_{T^t \triangle T^0}|\nabla u_0||\nabla w_t|\\
&\leq& \frac{|k-1|}{\min\{1,k\}}\left(\int_{T^t \triangle T^0}|\nabla u_0|^2\right)^{1/2}\left(\int_{T^t \triangle T^0}|\nabla w_t|^2\right)^{1/2}\\
&\leq& \frac{|k-1|}{\min\{1,k\}}\left(\int_{T^t \triangle T^0}|\nabla u_0|^2\right)^{1/2}\left(\int_{\om}|\nabla w_t|^2\right)^{1/2}.
\end{eqnarray*}
Hence,
\begin{equation*}
\int_{\om}|\nabla w_t|^2\leq \left(\frac{|k-1|}{\min\{1,k\}}\right)^2\int_{T^t \triangle T^0}|\nabla u_0|^2
\end{equation*}
Let   $\mathcal{S}_{\delta_t}=\cup_{j=1}^3B(P_j^0,\delta_t)$ with $\delta_t>0$ to be chosen later. Using (\ref{gradest1}) and (\ref{gradest2}) we obtain the following bounds
\begin{eqnarray*}
\int_{T^t \triangle T^0}|\nabla u_0|^2&=&\int_{(T^t \triangle T^0)\cap \mathcal{S}_{\delta_t}}|\nabla u_0|^2+\int_{(T^t \triangle T^0)\backslash \mathcal{S}_{\delta_t}}|\nabla u_0|^2\\
&\leq & \int_{(T^t \triangle T^0)\cap \mathcal{S}_{\delta_t}}|\nabla u_0|^2+\sup_{(T^t \triangle T^0)\backslash\mathcal{S}_{\delta_t}}|\nabla u_0|^2|T^t \triangle T^0|\\
&\leq &\int_{(T^t \triangle T^0)\cap \mathcal{S}_{\delta_t}}|\nabla u_0|^2+\frac{Ct}{\delta_t^{2(\gamma+1)}}\\
&\leq &\int_{(T^t \triangle T^0)\cap \mathcal{S}_{\delta_t}}|\nabla u_0|^2+\frac{Ct}{\delta_t^{2(\gamma+1)}}.
\end{eqnarray*}
Now, using (\ref{gradest3}), for any $j=1,2,3$, we get
\begin{eqnarray*}
\int_{T^t \triangle T^0\cap B(P_j^0,\delta_t)}|\nabla u_0|^2&\leq& C\int_{B(P_j^0,\delta_t)}(\textrm{dist}((x,y),P_j))^{2(\omega-1)}\\
&\leq & C\int_0^{\theta_j}\int_0^{\delta_t}\rho^{2(\omega-1)}\rho\, d\rho d \theta\leq C\int_0^{\delta_t}\rho^{2\omega-1}\, d\rho\leq C\delta_t^{2\omega}.
\end{eqnarray*}
In conclusion, we have
\[
\int_{T^t \triangle T^0}|\nabla u_0|^2\leq C\left(\delta_t^{2\omega}+\frac{t}{\delta_t^{2(\gamma+1)}}\right).
\]
Picking up $\delta_t=t^{\alpha}$ with $\alpha=\frac{1}{2(\omega+\gamma+1)}$ we derive
\[
\int_{T^t \triangle T^0}|\nabla u_0|^2\leq Ct^{\frac{\omega}{\omega+\gamma+1}}.
\]
Finally, by last inequality and by (\ref{energyest1}) we obtain
\[
\left(\int_{\om}|\nabla w_t|^2\right)^{1/2}\leq Ct^{\frac{\omega}{2(\omega+\gamma+1)}}=Ct^{\theta}
\]
with $0<\theta<\frac{1}{2}$ which ends the proof.

\end{proof}
We have also the following
\begin{prop}\label{pointwisegrad}
Let $\mathcal{L}=\bigcup_{j=1}^3\left(B(P_j^0,\delta_t)\cup B(P_j^t,\delta_t)\right)$ 
with $\delta_t=t^{\beta_1}$ and  $\beta_1=\frac{\theta\gamma}{2(\gamma+1)}$ and let $u_t$ and $u_0$ be defined as in Proposition \ref{energy}. Then, there exists a positive constant $C$, independent of $t$, such  that
\begin{equation}\label{pointwisest1}
\|\nabla u^e_t-\nabla u^e_0\|_{L^{\infty}(\der T^t\backslash(T^0\cup\mathcal{L})}+\|\nabla u^i_t-\nabla u^i_0\|_{L^{\infty}(\der T^t\cap T^0\backslash\mathcal{L})}\leq C\|f\|_{H^{1/2}(\der\om)} t^{\beta_1}
\end{equation}
and
\begin{equation}\label{pointwisest2}
\|\nabla u^e_t-\nabla u^e_0\|_{L^{\infty}(\der T^0\backslash(T^t\cup\mathcal{L})}+\|\nabla u^i_t-\nabla u^i_0\|_{L^{\infty}(\der T^0\cap T^t\backslash\mathcal{L})}\leq C \|f\|_{H^{1/2}(\der\om)}t^{\beta_1},
\end{equation}
\end{prop}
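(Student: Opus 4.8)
The plan is to reduce both estimates to a single pointwise bound on $\nabla w_t:=\nabla(u_t-u_0)$ at an individual point $x_0$ lying on $\der T^t$, away from the vertices and from the other triangle, and then to obtain that bound by interpolating between the global $L^2$ control coming from Proposition \ref{energy} and the one-sided Hölder control of the gradients coming from Proposition \ref{LiNirenberg}. The elementary interpolation I would use is: if $v\in C^\gamma(\overline{B_\rho(y_0)})$ with $B_\rho(y_0)\subset\RR^2$, then
$$|v(y_0)|\le [v]_{C^\gamma(B_\rho(y_0))}\,\rho^\gamma+C\rho^{-1}\|v\|_{L^2(B_\rho(y_0))},$$
which follows by writing $v(y_0)$ as its own average over $B_\rho(y_0)$, estimating the oscillation by the Hölder seminorm and the mean by Cauchy--Schwarz (in $\RR^2$, $|B_\rho|^{-1/2}\simeq\rho^{-1}$).

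Next I would fix the geometry. Take $x_0\in\der T^t\setminus(T^0\cup\mathcal L)$ for the exterior term (the interior term is analogous). Away from the vertices the edge of $T^t$ through $x_0$ is a segment, and since $T^t$ is an $O(t)$ perturbation of $T^0$ the two triangles have the same orientation for $t$ small; consequently the open exterior of $T^t$ near $x_0$ is contained in the exterior of $T^0$. I would then place a ball $B_\rho(y_0)$ with center $y_0=x_0+\rho\,n_t$ along the outer normal $n_t$ to $\der T^t$ at $x_0$, and radius $\rho$ to be chosen, so that $\overline{B_\rho(y_0)}$ lies on the exterior side of $T^t$ and meets $\der T^t$ only at $x_0$. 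Because $x_0\notin\mathcal L$ and $\rho$ will be chosen much smaller than $\delta_t$, this ball stays at distance $\ge\delta_t/2$ from every vertex of $T^0$ and of $T^t$. On $\overline{B_\rho(y_0)}$ both $\nabla u_t^e$ and $\nabla u_0^e$ are therefore $C^\gamma$ up to $\der T^t$, and \eqref{gradest2} gives, for each Cartesian component $v$ of $\nabla w_t$,
$$[v]_{C^\gamma(B_\rho(y_0))}\le C\,\delta_t^{-(\gamma+1)}=C\,t^{-\beta_1(\gamma+1)},\qquad \|v\|_{L^2(B_\rho(y_0))}\le\|\nabla w_t\|_{L^2(\om)}\le C\,t^{\theta},$$
the last bound being \eqref{energyest}.

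I would then run the interpolation at $y_0$ and transfer the estimate to $x_0$ along the radius (an extra cost $[v]_{C^\gamma}\rho^\gamma$ by Hölder continuity), obtaining
$$|\nabla w_t(x_0)|\le C\,t^{-\beta_1(\gamma+1)}\rho^\gamma+C\,\rho^{-1}t^{\theta}.$$
Balancing the two terms forces $\rho=t^{\,\theta/(\gamma+1)+\beta_1}$, and with $\beta_1=\tfrac{\theta\gamma}{2(\gamma+1)}$ both terms equal $t^{\theta\gamma/(2(\gamma+1))}=t^{\beta_1}$, which is exactly the claimed rate; one checks $\rho\ll\delta_t=t^{\beta_1}$, so the ball indeed avoids the vertices as required. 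Taking the supremum over admissible $x_0$ yields the exterior half of \eqref{pointwisest1}; repeating the argument on the interior side (using \eqref{gradest1} in place of \eqref{gradest2}) controls the interior half, and interchanging the roles of $T^0$ and $T^t$ gives \eqref{pointwisest2}.

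The step I expect to be the main obstacle is the geometric one: since the balance forces $\rho\gg t$, each ball is much larger than the $O(t)$ separation between $\der T^t$ and $\der T^0$, so a priori $\der T^0$ crosses $B_\rho(y_0)$. The point is that it suffices to place the ball entirely on one side of $\der T^t$, namely the side on which the relevant one-sided Hölder bound of Proposition \ref{LiNirenberg} is available for \emph{both} $u_t$ and $u_0$. Verifying that the exterior (resp.\ interior) of $T^t$ at $x_0$ is contained in the exterior (resp.\ interior) of $T^0$, which is precisely what makes the $C^\gamma$ bound on $\nabla u_0^e$ (resp.\ $\nabla u_0^i$) up to $\der T^t$ applicable, is the delicate point and relies on orientation preservation for small $t$ away from the vertices.
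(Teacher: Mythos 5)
Your analytic core is the same as the paper's: you interpolate between the $L^2$ bound \eqref{energyest} and the one-sided $C^\gamma$ bounds of Proposition \ref{LiNirenberg}, and your balancing radius $\rho=t^{\theta/(\gamma+1)+\beta_1}$ is exactly the paper's choice of $d$, producing the same two terms $t^{-\beta_1(\gamma+1)}\rho^\gamma$ and $\rho^{-1}t^\theta$. The genuine gap is the geometric step you yourself flag as delicate: the claim that ``the open exterior of $T^t$ near $x_0$ is contained in the exterior of $T^0$'' is false at the scale $\rho$ at which you need it, and orientation preservation cannot rescue it. The Proposition allows all three vertices to move, so a side of $T^t$ and the corresponding side of $T^0$ generically cross at a point $q$ in the \emph{middle} of the side (e.g.\ $\vec V_1$ pointing outward, $\vec V_2$ inward, so that the side rotates about an interior point); such a $q$ is far from every vertex and is not removed by $\mathcal{L}$. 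Work in coordinates where the side of $T^t$ is $\{h=0\}$ with exterior $\{h>0\}$: the corresponding side of $T^0$ is a line of slope $\epsilon=O(t)$ crossing $h=0$ at $q$, and the thin wedge $\{s>q,\ 0<h<\epsilon(s-q)\}$ lies in $T^0$. Your tangent ball has lower boundary $h\approx s^2/(2\rho)$, and since $\epsilon(s-q)>s^2/(2\rho)$ at $s\approx\epsilon\rho$ whenever $0\le q\lesssim\epsilon\rho$, every tangent ball of radius $\rho\gg t$ centered over such an $x_0$ must contain part of this wedge. On that overlap $u_0^e$ is not defined and $\nabla(u_t-u_0)$ jumps across $\der T^0$, so the Hölder seminorm bound you feed into the interpolation inequality fails on $B_\rho(y_0)$, and the pointwise estimate collapses exactly at the points where it is needed (the same failure occurs for the interior half of \eqref{pointwisest1}).

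This is repairable, but it needs an idea your write-up does not contain. Either (i) keep the tangent ball and treat $E=B_\rho(y_0)\cap T^0$ as an exceptional set in the averaging argument: $E$ is a sliver of area $O(t^3\rho^2)$ on which $|\nabla u_t|+|\nabla u_0|\le C\delta_t^{-(\gamma+1)}$ by Proposition \ref{LiNirenberg} (the ball stays away from all vertices), so its contribution to the mean is $O\bigl(t^{3}\delta_t^{-(\gamma+1)}\bigr)$, which is negligible compared with $t^{\beta_1}$; or (ii) adopt the paper's geometry, which is arranged precisely to avoid this issue: pick a point $x_d$ at distance $\approx d$ from $x_0$ inside the \emph{joint} complement $\om\setminus(T^0\cup T^t)$ (near $x_0$ this set always contains a wedge of opening $\pi-O(t)$, even at a crossing), use harmonicity of $\nabla(u_t-u_0)$ there and the mean value property to get $|\nabla(u_t-u_0)(x_d)|\le Cd^{-1}t^{\theta}$, and then transfer from $x_d$ to $x_0$ for $u_t^e$ and $u_0^e$ \emph{separately}, each with its own one-sided Hölder estimate, which is a pairwise bound requiring only that both points lie in the closure of the respective exterior region. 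The paper's argument never needs both one-sided Hölder bounds on a common two-dimensional ball touching $\der T^t$; that is exactly the requirement your construction cannot meet near a crossing of corresponding sides.
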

for $|t|<d_0/(2|\vec V|)$.
\begin{proof}
It is sufficient to show the first inequality (\ref{pointwisest1}) since (\ref{pointwisest2}) follows similarly. Let $t<d<\frac{d_0}{2}$ and denote by  
\[
\om_d^t=\{(x,y)\in\om\backslash (T^t\cup T^0): \textrm{dist}((x,y),\der(\om\backslash(T^t\cup T^0)))\geq d\}.
\]
Observe that since $\nabla (u^e_t- u^e_0)$ is harmonic in $\om\backslash(T^t\cup T^0)$, by the mean value theorem we get
\[
\|\nabla u^e_t-\nabla u^e_0\|_{L^{\infty}(\om_d^t)}\leq \frac{C}{d}\|\nabla u_t-\nabla u_0\|_{L^{2}(\om)}
\]
Then, by (\ref{energyest}) we obtain
\begin{equation}\label{extest}
\|\nabla u^e_t-\nabla u^e_0\|_{L^{\infty}(\om_d^t)}\leq \frac{C}{d}t^{\theta}.
\end{equation}
Let now $(x,y)\in \der T^t\backslash(T^0\cup\mathcal{L})$ and let $(x_d,y_d)$ be the closest point to $(x,y)$ in $\om_d^t$. Then, by Proposition \ref{LiNirenberg}, we have
\[
|\nabla u_t^e(x,y)-\nabla u_t^e(x_d,y_d)|\leq C\frac{d^{\gamma}}{\delta_t^{\gamma+1}}
\]
and
\[
|\nabla u_0^e(x,y)-\nabla u_0^e(x_d,y_d)|\leq C\frac{d^{\gamma}}{\delta_t^{\gamma+1}}.
\]
Combining last two inequalities with (\ref{extest}) we get
\[
|\nabla u_t^e(x,y)-\nabla u_0^e(x,y)|\leq C\left(\frac{d^{\gamma}}{\delta_t^{\gamma+1}}+\frac{t^{\theta}}{d}\right).
\]
By choosing $d=t^{\frac{\theta+\beta_1(\gamma+1)}{\gamma+1}}$ and  $\delta_t=t^\frac{\theta\gamma}{2(\gamma+1)}$ we obtain, for $(x,y)\in \der T^t\backslash (T^0\cup\mathcal{L})$, 
\[
|\nabla u_t^e(x,y)-\nabla u_0^e(x,y)|\leq Ct^{\frac{\theta\gamma}{2(\gamma+1)}}
\]
so that 
\[
\|\nabla u_t^e-\nabla u_0^e\|_{L^{\infty}( \der T^t\backslash (T^0\cup\mathcal{L}))}\leq C t^{\frac{\theta\gamma}{2(\gamma+1)}}. 
\]

Let now $(x,y)\in (\der T^t\cap T^0)\backslash \mathcal{L}$ and define the set
\[
(T^t\cap T^0)_d=\{(x,y)\in T^t\cap T^0: \textrm{ dist}((x,y),\der T^t\cup \der T^0)\geq d\}
\]
and let $(x_d,y_d)\in (T^t\cap T^0)_d $  be the closest point to $(x,y)$. Since, $\nabla (u^i_t- u^i_0)$ is harmonic in $T^t\cap T^0$ we get, applying the mean value theorem and using (\ref{energyest})
\[
\|\nabla u^i_t-\nabla u^i_0\|_{L^{\infty}((T^t\cap T^0)_d)}\leq \frac{C}{d}t^{\theta}
\]
and by (\ref{gradest1}) we get
\[
|\nabla u^i_t(x,y)-\nabla u^i_0(x,y)|\leq \left(\frac{d^{\gamma}}{\delta_t^{\gamma+1}}+\frac{t^{\theta}}{d}\right)
\]
and choosing $d=t^{\frac{\theta+\beta_1(\gamma+1)}{\gamma+1}}$ and  $\delta_t=t^\frac{\theta\gamma}{2(\gamma+1)}$ we finally derive
\[
\|\nabla u^i_t-\nabla u^i_0\|_ {L^{\infty}((\der T^t\cap T^0)\backslash\mathcal{L})}\leq C t^{\frac{\theta\gamma}{2(\gamma+1)}}
\]
which concludes the proof.
\end{proof}

\section{Proof of the main result}\label{section3}
In order to prove our main result we will first establish the validity of (\ref{shapederivative}) for particular choices of the vector $\vec V$.
Indeed, we first consider a special direction $\vec V=(\vec V_1,\vec 0,\vec 0)$ where $\vec V_1$ is $\frac{P_2^0-P_1^0}{|P_2^0-P_1^0|}$ or $\frac{P_3^0-P_1^0}{|P_3^0-P_1^0|}$. The reason for this choice is that $T^0\triangle T^t$, that is the support of $\sigma_t-\sigma_0$, is easier to describe. Then we consider $\vec V=(\vec v_1,\vec 0,\vec 0)$ where $\vec v_1$ is an arbitrary vector that we decompose as a linear combination of the vectors $\frac{P_2^0-P_1^0}{|P_2^0-P_1^0|}$ and $\frac{P_3^0-P_1^0}{|P_3^0-P_1^0|}$. In order to perform this step we need to show that the functional 
\[
D(t):=\int_{\partial T^t}M_t\nabla u_t^e\cdot\nabla v_t^e(\Phi_t^{\vec V}\cdot \vec n_t)d\sigma
\]
is continuous at $t=0$. The last step is just based on linearity of the limit process.

\begin{lm}\label{sidederivative}
Let $\vec V=(\vec V_1,\vec 0,\vec 0)$ where $\vec V_1=\frac{P_2^0-P_1^0}{|P_2^0-P_1^0|}, \left(\frac{P_3^0-P_1^0}{|P_3^0-P_1^0|}\right)$.
Then
\begin{equation}\label{sidederivative0}
\begin{aligned}
\left|G(t)-G(0)-\vphantom{\int_{\der T^0}}\right.&\left.t(k-1)\int_{\der T^0}(M_0\nabla u_0^e\cdot\nabla v_0^e)(\Phi_0^{\vec V}\cdot n_0)\,d\sigma\right|\\
&\leq Ct^{1+\alpha}\|f\|_{H^{1/2}(\der\om)}\|g\|_{H^{1/2}(\der\om)}
\end{aligned}
\end{equation}
for $|t|<d_0/2$.
\end{lm}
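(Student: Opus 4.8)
The plan is to start from the weak formulation, reduce the increment $G(t)-G(0)$ to a volume integral over the symmetric difference $T^t\triangle T^0$, then observe that for the special direction $\vec V=(\vec V_1,\vec 0,\vec 0)$ this region is a single thin triangular sliver collapsing onto one side of $\der T^0$, and finally flatten the sliver integral into the claimed boundary integral, controlling the vertex contributions separately.

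First I would use that $\Lambda_{\sigma}(f)$ coincides with the conormal derivative (since $\sigma\equiv 1$ near $\der\om$), so that for any competitor with trace $g$ one has $G(t)=\int_\om\sigma_t\nabla u_t\cdot\nabla v_0\,dx$ and $G(0)=\int_\om\sigma_0\nabla u_0\cdot\nabla v_0\,dx$. Writing $\sigma_t\nabla u_t-\sigma_0\nabla u_0=\sigma_0\nabla(u_t-u_0)+(\sigma_t-\sigma_0)\nabla u_t$ and using $u_t-u_0\in H^1_0(\om)$ together with the equation for $v_0$, the first term integrates to zero, leaving
\[
G(t)-G(0)=(k-1)\int_{T^t\triangle T^0}(\chi_{T^t}-\chi_{T^0})\,\nabla u_t\cdot\nabla v_0\,dx.
\]
This is the basic identity from which everything follows.

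Next I would exploit the geometry. If $\vec V_1=\tfrac{P_2^0-P_1^0}{|P_2^0-P_1^0|}$, then $P_1^t$ slides along the side $P_1^0P_2^0$, so $T^t\triangle T^0$ is exactly the triangle $P_1^0P_1^tP_3^0$: a sliver adjacent to the side $P_1^0P_3^0$ whose transverse thickness at arclength $s$ equals $t\,|\Phi_0^{\vec V}\cdot n_0|(s)+O(t^2)$. Here I use that $\Phi_0^{\vec V}$ interpolates $\vec V_1$ at $P_1^0$ and $\vec 0$ at $P_3^0$, is tangent to $P_1^0P_2^0$, and vanishes on $P_2^0P_3^0$, so that $\Phi_0^{\vec V}\cdot n_0=0$ on the other two sides and the boundary integral localizes automatically to $P_1^0P_3^0$. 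On the sliver (inside $T^0$, outside $T^t$) the integrand is $\nabla u_t^e\cdot\nabla v_0^i$. I would replace $\nabla u_t^e$ by its trace $\nabla u_0^e$ on $\der T^0$ (flattening via the interior Hölder bound of Proposition \ref{LiNirenberg}, then comparing through Proposition \ref{pointwisegrad}), and use the transmission conditions---continuity of the tangential derivative and of the flux $k\,\der_{n_0}v_0^i=\der_{n_0}v_0^e$---to write $\nabla v_0^i=M_0\nabla v_0^e$. Since $M_0$ is symmetric, the integrand becomes $M_0\nabla u_0^e\cdot\nabla v_0^e$, and a thin-region (coarea) integration over the collapsing sliver yields $t\int_{\der T^0}(M_0\nabla u_0^e\cdot\nabla v_0^e)(\Phi_0^{\vec V}\cdot n_0)\,d\sigma$; careful bookkeeping shows the signs of $\chi_{T^t}-\chi_{T^0}$ and of $\Phi_0^{\vec V}\cdot n_0$ combine correctly for both signs of $t$ and both admissible choices of $\vec V_1$.

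It remains to estimate the errors, and this is where the main difficulty lies. Away from the vertices, on $(T^t\triangle T^0)\setminus\mathcal{L}$, the two flattening steps and Proposition \ref{pointwisegrad} each cost a factor $O(t^{\beta_1})$ or $O(t^{\gamma-\beta_1(\gamma+1)})$ per unit area, and integrating against the $O(t)$-thin sliver produces an overall $O(t^{1+\alpha})$. The delicate region is $\mathcal{L}$, the union of balls of radius $\delta_t=t^{\beta_1}$ about the vertices, where $\nabla u_t$ and $\nabla v_0$ may blow up. There I would discard the boundary term and bound the genuine contribution directly through Proposition \ref{BellFriedIs}: since $|\nabla u_t|,|\nabla v_0|\lesssim\mathrm{dist}^{\,\omega-1}$ with $\omega>\tfrac12$ and the sliver has transverse width $O(t)$, one gets $\int_{(T^t\triangle T^0)\cap\mathcal{L}}|\nabla u_t||\nabla v_0|\lesssim t\int_0^{\delta_t}\rho^{2\omega-2}\,d\rho\lesssim t\,\delta_t^{\,2\omega-1}=t^{1+\beta_1(2\omega-1)}$, which is strictly higher order precisely because $\omega>\tfrac12$; the same bound controls the vertex part of the boundary integral. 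Collecting the bulk and vertex estimates and letting $\alpha>0$ be the minimum of the exponents above gives (\ref{sidederivative0}). The crux is thus the quantitative balance at the vertices: the admissible gradient singularity is mild enough that, with $\delta_t=t^{\beta_1}$ already tuned in Proposition \ref{pointwisegrad}, all vertex contributions remain higher order in $t$.
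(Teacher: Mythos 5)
Your proposal is correct and follows essentially the same route as the paper's proof: the identity $G(t)-G(0)=(k-1)\int_{T^t\triangle T^0}(\chi_{T^t}-\chi_{T^0})\nabla u_t\cdot\nabla v_0$ is exactly the Alessandrini identity the paper starts from, the sliver geometry and flattening via Proposition \ref{LiNirenberg} and Proposition \ref{pointwisegrad}, the transmission relation $\nabla v_0^i=M_0\nabla v_0^e$ with symmetry of $M_0$, and the vertex estimates via Proposition \ref{BellFriedIs} with $\delta_t=t^{\beta_1}$ (yielding the exponent $\min(\beta_1,\beta_1(2\omega-1),\gamma(1-\theta/2))$) all match the paper's argument, with only cosmetic differences (vertex neighborhoods as balls rather than the strips $\mathcal{C}_{\delta_t}$, and a slightly more heuristic scaling bound near the vertices where the paper uses an explicit change of variables).
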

\begin{proof}
Without loss of generality let $\vec V=\left(\frac{P_2^0-P_1^0}{|P_2^0-P_1^0|},\vec 0,\vec 0\right)$ and consider $P^t=P^0+t\vec V$. By Alessandrini's identity
\begin{equation}\label{sidederivative1}
\frac{G(t)-G(0)}{t}=\frac{1}{t}<(\Lambda_t-\Lambda_0)f,g>=\frac{1}{t}\int_{\om}(\sigma_t-\sigma_0)\nabla u_t\cdot\nabla v_0
\end{equation}
where $\Lambda_t:=\Lambda_{\sigma_t}$ and $\Lambda_0:=\Lambda_{\sigma_0}$ and $u_t$ is solution to Problem (\ref{probP}) corresponding to $\sigma_t$ and $v_0$ the solution to Problem (\ref{probPt}) corresponding to $\sigma_0$. We choose a coordinate frame in such a way that $P_1^0=(0,0)$, $P_2^0=(-x_2,0)$, $x_2>0$,  $P_3^0=(x_3,y_3)$ with $y_3<0$. Then, $\vec V=(-\vec e_1, \vec 0, \vec 0)$ and $P^t=((-t,0),(-x_2,0),(x_3,y_3))$. Assume $t>0$ (the case $t<0$ can be treated similarly) and consider the set
\[
\mathcal{C}_{\delta_t}=T^0\backslash \left(T^t\cup\{(x,y): -\delta_t<y<y_3+\delta_t\}\right),
\]
where $\delta_t$ will be chosen later. For every $(x,y)\in\mathcal{C}_{\delta_t}$ let $(x(y),y)=(\frac{x_3}{y_3}y,y)$ be the corresponding point on the side $P_2^0P_1^0$ of $T^0$. By Proposition \ref{LiNirenberg} applied to $u_t^e$ and $v_0^i$ we get
\begin{equation}\label{approxgrad}
\begin{array}{rcl}
\nabla u_t^e(x,y)&=&\nabla u_t^e(x(y),y)+R_1(x,y)\\
\nabla v_0^e(x,y)&=&\nabla v_0^e(x(y),y)+R_2(x,y)
\end{array}
\end{equation}
where 
\[
|R_i(x,y)|\leq C\frac{|x-x(y)|^{\gamma}}{\delta_t^{\gamma+1}},\,\,\,i=1,2.
\]
Since 
\[|x-x(y)|\leq |x_1(y)-x(y)|\leq t\frac{|y-y_3|}{|y_3|},
\] 
where $x_1(y)=\frac{x_3+t}{y_3}y-t$, we get
\begin{equation}\label{reminderbound}
|R_i(x,y)|\leq C\frac{t^{\gamma}}{\delta_t^{\gamma+1}}\frac{|y-y_3|}{|y_3|},\,\,\,i=1,2.
\end{equation}
Using (\ref{sidederivative1}) we can write
\begin{equation}\label{sidederivative2}
\frac{G(t)-G(0)}{t}=\frac{1-k}{t}\int_{\mathcal{C}_{\delta_t}}\nabla u^e_t\cdot\nabla v^i_0+\frac{1-k}{t}\int_{T^0\backslash(T^t\cup\mathcal{C}_{\delta_t})}\nabla u^e_t\cdot\nabla v^i_0=I_1+I_2
\end{equation}
Let us first estimate $I_1$. We use (\ref{gradest3}) and (\ref{gradest4}) and we split the integral over $\mathcal{C}_{\delta_t}=\mathcal{C}^1_{\delta_t}\cup\mathcal{C}^2_{\delta_t}$ in two parts
\begin{eqnarray*}
\left |\frac{1}{t}\int_{\mathcal{C}^1_{\delta_t}}\nabla u^e_t\cdot\nabla v^i_0\right|&=&\left |\frac{1}{t}\int_{-\delta_t}^0\left(\int_{x_1(y)}^{x(y)}\nabla u^e_t\cdot\nabla v^i_0\,dx\right)dy\right|\\
&\leq& \frac{C}{t}\int_{-\delta_t}^0\left(\int_{-t}^{0}\frac{dx}{((x+t)^2+y^2)^{\frac{1-\omega}{2}}(x^2+y^2)^{\frac{1-\omega}{2}}}\right)dy.
\end{eqnarray*}
By the change of variables $x=tX$ and $y=tY$ we get
\begin{eqnarray*}
\left |\frac{1}{t}\int_{\mathcal{C}^1_{\delta_t}}\nabla u^e_t\cdot\nabla v^i_0\right|&\leq& \!\!\! C\frac{t^{2\omega}}{t}\int_{-\delta_t/t}^0\left(\int_{-1}^{0}\frac{dX}{((X+1)^2+Y^2)^{\frac{1-\omega}{2}}(X^2+Y^2)^{\frac{1-\omega}{2}}}\right)dY\\
&\leq &\!\!\!C\delta_t^{2\omega-1}\int ^0_{-\frac{\delta_t}{t}}\frac{dY}{|Y|^{2-2\omega}}\leq Ct^{2\omega-1}\left(\frac{\delta_t}{t}\right)^{2\omega-1}
\leq C\delta_t^{2\omega-1} 
\end{eqnarray*}
and proceeding similarly one can see that
\begin{eqnarray*}
\left |\frac{1}{t}\int_{\mathcal{C}^2_{\delta_t}}\nabla u^e_t\cdot\nabla v^i_0\right|&\leq &\left |\frac{C}{t}\int_{y_3}^{y_3+\delta_t}\left(\int_{x_1(y)}^{x(y)}\frac{dx}{((x-x_3)^2+(y-y_3)^2)^{1-\omega}}\right)dy\right|\\
&\leq& C \int_{y_3}^{y_3+\delta_t}\frac{dy}{((x(y)-x_3)^2+(y-y_3)^2)^{1-\omega}} \\&\leq& C \int_{y_3}^{y_3+\delta_t}\frac{dy}{(y-y_3)^{2-2\omega}}\leq C\delta_t^{2\omega-1}.
\end{eqnarray*}
Hence, alltogether we end up with
\begin{equation}\label{estI1}
|I_1|\leq C \delta_t^{2\omega-1}.
\end{equation}
Let us now consider $I_2$ and let $B_t=T^0\backslash(T^t\cup \mathcal{C}_{\delta_t})$. Inserting (\ref{approxgrad}) into $I_2$ leads to
\begin{eqnarray*}
I_2\!\!&=&\frac{1-k}{t}\left\{\int_{B_t}\nabla u^e_t(x(y),y)\cdot\nabla v^i_0(x(y),y)\,dxdy\right. \\
\!\!&+&\!\!\!\!\!\left.\int_{B_t}\!\!(R_1(x,y)\cdot\nabla v^i_0(x,y)+\nabla u^e_t(x,y)\cdot R_2(x,y)+R_1(x,y)\cdot R_2(x,y))\,dxdy\right\}\\&=:&I_3+I_4
\end{eqnarray*}
Let us evaluate $I_3$. We know that
\begin{eqnarray*}
I_3&=&\frac{(1-k)}{t}\int_{B_t}\nabla u^e_t(x(y),y)\cdot\nabla v^i_0(x(y),y)\,dxdy\\&=&\frac{(1-k)}{t}\int_{y_3+\delta_t}^{-\delta_t}\nabla u^e_t(x(y),y)\cdot\nabla v^i_0(x(y),y)(x(y)-x_1(y)) \,dy\\
&=&(1-k)\int_{y_3+\delta_t}^{-\delta_t}\nabla u^e_t(x(y),y)\cdot\nabla v^i_0(x(y),y)\left(\frac{y-y_3}{-y_3}\right) \,dy\\
&=&(1-k)\int_{y_3+\delta_t}^{-\delta_t}(\nabla u^e_0(x(y),y)+\rho(y,t))\cdot\nabla v^i_0(x(y),y)\left(\frac{y-y_3}{-y_3}\right)dy \\
\quad(\rho(y,t)&:=&\nabla u_t^e(x(y),y)-\nabla u_0^e(x(y),y))\\
&=&(1-k)\left\{\int_{y_3+\delta_t}^{-\delta_t}\nabla u^e_0(x(y),y)\cdot\nabla v^i_0(x(y),y)\left(\frac{y-y_3}{-y_3}\right)dy\right. + \\
&+&\left. \int_{y_3+\delta_t}^{-\delta_t}\rho(y,t)\cdot\nabla v^i_0(x(y),y)\left(\frac{y-y_3}{-y_3}\right)dy\right\}\\
&=& (1-k)\left\{\int_{y_3}^0\nabla u^e_0(x(y),y)\cdot\nabla v^i_0(x(y),y)\left(\frac{y-y_3}{-y_3}\right)dy\right.-\\
&-&\int_{[-\delta_t,0]\cup [y_3+\delta_t,y_3]}\nabla u^e_0(x(y),y)\cdot\nabla v^i_0(x(y),y)\left(\frac{y-y_3}{-y_3}\right)dy+\\
&+&\left. \int_{y_3+\delta_t}^{-\delta_t}\rho(y,t)\cdot\nabla v^i_0(x(y),y)\left(\frac{y-y_3}{-y_3}\right)dy\right\}.
\end{eqnarray*}
Observe now that
\begin{eqnarray*}
\left|\int_{[-\delta_t,0]\cup [y_3+\delta_t,y_3]}\nabla u^e_0(x(y),y)\cdot\nabla v^i_0(x(y),y)\left(\frac{y-y_3}{-y_3}\right)dy\right|\\
\leq \int_{[-\delta_t,0]\cup [y_3+\delta_t,y_3]}|\nabla u^e_0(x(y),y)||\nabla v^i_0(x(y),y)|dy.
\end{eqnarray*}
From Proposition \ref{BellFriedIs} and using the fact that $\omega>1/2$,  we get
\begin{eqnarray*}
\int_{[-\delta_t,0]}|\nabla u^e_0(x(y),y)||\nabla v^i_0(x(y),y)|dy&\leq &C\int_{[-\delta_t,0]}\frac{dy}{(y^2+(x(y))^2)^{1-\omega}}\\
&\leq &C\int_{[-\delta_t,0]}\frac{dy}{|y|^{2(1-\omega)}}\leq C\delta_t^{2\omega-1}.
\end{eqnarray*}
For $\int_{ [y_3+\delta_t,y_3]}|\nabla u^e_0(x(y),y)||\nabla v^i_0(x(y),y)|dy$ we can proceed similary getting 
\[
\int_{ [y_3+\delta_t,y_3]}|\nabla u^e_0(x(y),y)||\nabla v^i_0(x(y),y)|dy\leq C\delta_t^{2\omega-1}.
\]
Hence, we have
\begin{eqnarray*}
I_3&=&(1-k)\left\{\int_{y_3}^0\nabla u^e_0(x(y),y)\cdot\nabla v^i_0(x(y),y)\left(\frac{y-y_3}{-y_3}\right)dy\right. +\\
&+&\left. \int_{y_3+\delta_t}^{-\delta_t}\rho(y,t)\cdot\nabla v^i_0(x(y),y)\left(\frac{y-y_3}{-y_3}\right)dy+O(\delta_t^{2\omega-1})\right\}.
\end{eqnarray*}
Note that the following estimate holds
\begin{eqnarray}\label{v0}
\int_{y_3+\delta_t}^{-\delta_t}|\nabla v^i_0(x(y),y)|dy&\leq&\int_{y_3+\delta_t}^{y_3/2}\frac{dy}{(x(y)-x_3)^2+(y-y_3-\delta_t)^2)^{\frac{1-\omega}{2}}}+\\ \nonumber
&&+  \int_{y_3/2}^{-\delta_t}\frac{dy}{(x(y)^2+y^2)^{\frac{1-\omega}{2}}}\leq C.
\end{eqnarray}
Applying Proposition \ref{pointwisegrad} for $\delta_t=t^{\beta_1}$ with $\beta_1=\frac{\theta\gamma}{2(\gamma+1)}$ we have
\[
|\rho(y,t)|\leq C t^{\beta_1}.
\]
By last inequality and by (\ref{v0}) we derive
\[
\left|\int_{y_3+\delta_t}^{-\delta_t}\rho(y,t)\cdot\nabla v^i_0(x(y),y)\left(\frac{y-y_3}{-y_3}\right)dy\right|\leq  Ct^{\beta_1}.
\]
Eventually, we get
\begin{equation}\label{estI3}
I_3=(1-k)\int_{y_3}^0\nabla u^e_0(x(y),y)\cdot\nabla v^i_0(x(y),y)\left(\frac{y-y_3}{-y_3}\right)dy+O(t^{\beta_2})
\end{equation}
where $\beta_2=\min\{\beta_1, (2\omega-1)\beta_1\}$. 

Finally, let us evaluate $I_4$.

Using the estimates (\ref{gradest3}),  (\ref{gradest4}), (\ref{pointwisest1}), (\ref{approxgrad}) and (\ref{reminderbound}) and choosing  $\delta_t=t^{\beta_1}$ with $\beta_1=\frac{\theta\gamma}{2(\gamma+1)}$ as in Proposition \ref{pointwisegrad} we can estimate $I_4$ in the following way
\begin{eqnarray*}
|I_4|\!\!\!\!&\leq&\!\!\!\! \frac{C}{t}\int_{y_3+\delta_t}^{-\delta_t}dy\int_{x_1(y)}^{x(y)}\left\{\frac{t^{\gamma}}{\delta_t^{\gamma+1}}\left|\frac{y-y_3}{y_3}\right|^{\gamma}(|\nabla u_t^e(x(y),y)|+\right.\\
&&\left.\hphantom{aaaaaaaaaaaaaaaaaaaa}+|\nabla v_0^i(x(y),y)|)+\frac{t^{2\gamma}}{\delta_t^{2(\gamma+1)}}\left|\frac{y-y_3}{y_3}\right|^{2\gamma}\right\}dx\\
&\leq &\!\!\!\!\frac{C}{t}\int_{y_3+\delta_t}^{-\delta_t}\!\!\!\!\!|x(y)-x_1(y)|\left\{\frac{t^{\gamma}}{\delta_t^{\gamma+1}}(|\nabla u_t^e(x(y),y)|+|\nabla v_0^i(x(y),y)|)+\frac{t^{2\gamma}}{\delta_t^{2(\gamma+1)}}\right\}dy\\
&\leq &\!\!\!\!C\int_{y_3+\delta_t}^{-\delta_t}\frac{t^{\gamma}}{\delta_t^{\gamma+1}}(|\nabla u_t^e(x(y),y)|+|\nabla v_0^i(x(y),y)|)\,dy+C\left(\frac{t^{\gamma}}{\delta_t^{\gamma+1}}\right)^2.\\
\end{eqnarray*}
Finally, using the fact that, by Proposition \ref{pointwisegrad}
\[
|\nabla u_t^e(x(y),y)|\leq |\nabla u_0^e(x(y),y)|+Ct^{\beta_1}
\]
and noting that, proceeding similarly as in (\ref{v0}), we have  $\int_{y_3+\delta_t}^{-\delta_t}|\nabla u_0^e(x(y),y)|\leq C$  we get
\begin{equation}\label{estI4}
|I_4|\leq Ct^{(1-\frac{\theta}{2})\gamma}(1+t^{\beta_1}+t^{(1-\frac{\theta}{2})\gamma}).
\end{equation}
Inserting $(\ref{estI1}), $(\ref{estI3}) and (\ref{estI4}) into (\ref{sidederivative2}) we get
\[
\frac{G(t)-G(0)}{t}=(1-k)\int_{y_3}^0\nabla u^e_0(x(y),y)\cdot\nabla v^i_0(x(y),y)\left(\frac{y-y_3}{-y_3}\right)dy+r(t)
\]
where $|r(t)|\leq Ct^{\beta_3}$ where $\beta_3=\min (\beta_1,\beta_2, (1-\frac{\theta}{2})\gamma)$. Finally, observing that $\Phi_0^{\vec V}\cdot \vec n_0=\frac{y_3-y}{\sqrt{x_3^2+y_3^2}}$ on the side $P_3^0P_1^0$ and $\Phi_0^{\vec V}\cdot \vec n_0=0$ on the other sides of $T^0$ we can write
\[
\frac{G(t)-G(0)}{t}=(k-1)\int_{\partial T^0}\nabla u^e_0\cdot\nabla v^i(\Phi_0^{\vec V}\cdot \vec n_0)d\sigma+r(t).
\]
Finally, using the transmission conditions we have
\[
\nabla v_0^i=M_0\nabla v_0^e\textrm{ a.e. on } \partial T^0
\]
and we get
\begin{eqnarray*}
\frac{G(t)-G(0)}{t}&=&(k-1)\int_{\partial T^0}\nabla u^e_0\cdot M_0\nabla v^e(\Phi_0^{\vec V}\cdot \vec n_0)d\sigma+r(t)\\
&=&(k-1)\int_{\partial T^0}M_0\nabla u^e_0\cdot\nabla v^e(\Phi_0^{\vec V}\cdot \vec n_0)d\sigma+r(t)
\end{eqnarray*}
from which the claim follows.
\end{proof}
\begin{rem}
Observe that an analogous formula can be derived similarly choosing $\vec V=(\vec 0,\vec V_2,\vec 0)$ where $\vec V_2=\frac{P_1^0-P_2^0}{|P_1^0-P_2^0|}, \left(\frac{P_3^0-P_2^0}{|P_3^0-P_2^0|}\right)$ or $\vec V=(\vec 0,\vec 0,\vec V_3)$ where $\vec V_3=\frac{P_1^0-P_3^0}{|P_1^0-P_3^0|}, \left(\frac{P_2^0-P_3^0}{|P_2^0-P_3^0|}\right)$.
\end{rem}
\begin{rem}
We note that the formula (\ref{sidederivative0}) applies also to the case where $\vec V$ is not a unit vector. In fact, using the linearity of the map $\Phi^{\vec V}$ an easy computation gives 
\begin{eqnarray*}
G^\prime(0)&=&(k-1)|\vec V|\int_{\der T^0}(M_0\nabla u_0^e\cdot\nabla v_0^e)(\Phi_0^{\frac{\vec V}{|\vec V|}}\cdot n_0)d\sigma\\
&=&(k-1)\int_{\der T^0}(M_0\nabla u_0^e\cdot\nabla v_0^e)(\Phi_0^{\vec V}\cdot n_0)d\sigma
\end{eqnarray*}
\end{rem}

Let now $\vec V$ indicate an arbitrary vector of $\RR^2\times\RR^2\times\RR^2$ and let $T^0$ be a triangle of vertices $P^0\in\mathcal{V}$. Let $P^t=P^0+t\vec W$ for $t$  sufficiently small and with $\vec W=(\vec w_1,\vec 0,\vec 0)$  and let $T^t$ be the triangle of vertices  $P^t$. Consider
\[
D(t):=\int_{\partial T^t}M_t\nabla u_t^e\cdot\nabla v_t^e(\Phi_t^{\vec V}\cdot  n_t)d\sigma
\]
where $u_t$ and $v_t$ are respectively the solutions of (\ref{probP}) and (\ref{probPt}) corresponding to $\sigma_t$ and $n_t$ is the unit outer normal to $\partial T^t$. Then, we have the following
\begin{lm}\label{continuity}
There exists  constants $\beta\in (0,1)$  and $C>0$  independent on $t$ such that
\[
|D(t)-D(0)|\leq C\|f\|_{H^{1/2}(\der\om)}\|g\|_{H^{1/2}(\der\om)}t^{\beta}
\]
for $|t|<d_0/(2|\vec V|)$.
\end{lm}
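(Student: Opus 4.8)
The plan is to compare the two boundary integrals \emph{side by side}. Since the displacement $\vec W=(\vec w_1,\vec 0,\vec 0)$ moves only the vertex $P_1$, the side $P_2^0P_3^0$ is common to $\partial T^0$ and $\partial T^t$, while the two sides issuing from $P_1$ are displaced by a quantity of order $t$; I parametrize each side of $T^t$ by the affine correspondence that fixes the common endpoints and identifies a point $x\in\partial T^t$ with the homologous point $x'\in\partial T^0$ on the corresponding side, so that $|x-x'|\le Ct$ and the line element, the outer normal $n_t$, the matrix $M_t$, and the map $\Phi_t^{\vec V}$ differ from their $t=0$ counterparts by $O(t)$ (the vertices remain non-degenerate by the constraints defining $\mathcal{V}$, and the coefficients of the affine map $\Phi_t^{\vec V}$ depend smoothly on $P^t=P^0+t\vec W$); on the fixed side these geometric factors in fact coincide. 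I then split each side into the arcs lying in $\mathcal{L}=\bigcup_j\big(B(P_j^0,\delta_t)\cup B(P_j^t,\delta_t)\big)$, with $\delta_t=t^{\beta_1}$ and $\beta_1=\frac{\theta\gamma}{2(\gamma+1)}$ as in Proposition \ref{pointwisegrad}, and the complementary bulk part; note that $\delta_t\gg t$, so the $O(t)$ mismatch between the sides of $T^t$ and $T^0$ near $P_1$, in particular the point where a moving side crosses $\partial T^0$, is absorbed into the vertex arcs.

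On the vertex arcs I discard both integrals using the singular bound of Proposition \ref{BellFriedIs}. Since $M_t$, $n_t$, and $\Phi_t^{\vec V}$ are uniformly bounded and $|\nabla u_t^e|,|\nabla v_t^e|\le C\,\mathrm{dist}(\cdot,P_j)^{\omega-1}$ with $\omega>\tfrac12$, the integrand is bounded by $C\,\mathrm{dist}(\cdot,P_j)^{2(\omega-1)}$, and the contribution of a $\delta_t$-arc about each vertex is at most $C\int_0^{\delta_t}\rho^{2\omega-2}\,d\rho\le C\delta_t^{2\omega-1}$; the same holds for $D(0)$. Hence the vertex arcs contribute $O\!\big(\delta_t^{2\omega-1}\big)=O\!\big(t^{(2\omega-1)\beta_1}\big)$.

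On the bulk part I compare the integrands pointwise through the homologous-point map $x\mapsto x'$. The essential device is to use the \emph{representation of the integrand adapted to the side of $\partial T^0$ on which $x$ lies}: from the transmission conditions $\nabla u_t^i=M_t\nabla u_t^e$ and $\nabla v_t^i=M_t\nabla v_t^e$ one obtains, on $\partial T^t$, the identity $M_t\nabla u_t^e\cdot\nabla v_t^e=M_t^{-1}\nabla u_t^i\cdot\nabla v_t^i$ (and likewise at $t=0$). Where $x\in\partial T^t\setminus(T^0\cup\mathcal{L})$ I keep the exterior form and invoke the exterior estimate of Proposition \ref{pointwisegrad}, $|\nabla u_t^e-\nabla u_0^e|\le Ct^{\beta_1}$ and $|\nabla v_t^e-\nabla v_0^e|\le Ct^{\beta_1}$; where $x\in(\partial T^t\cap T^0)\setminus\mathcal{L}$ I switch to the interior form and use instead the interior estimate $|\nabla u_t^i-\nabla u_0^i|\le Ct^{\beta_1}$, $|\nabla v_t^i-\nabla v_0^i|\le Ct^{\beta_1}$, together with $|M_t^{\pm1}-M_0^{\pm1}|\le Ct$. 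In either case I then move from $x$ to the homologous $x'\in\partial T^0$ via the Hölder bound of Proposition \ref{LiNirenberg}, which costs $C\delta_t^{-(\gamma+1)}|x-x'|^{\gamma}\le C\,\delta_t^{-(\gamma+1)}t^{\gamma}$, and replace the geometric factors at a further $O(t)$ cost. Estimating one of the two gradient factors in each error term by the singular weight $\mathrm{dist}(\cdot,P_j)^{\omega-1}$ (Proposition \ref{BellFriedIs}) keeps every arising integral finite up to the vertex, so that the bulk integral of the difference is bounded by $C\big(t^{\beta_1}+t^{\gamma}\delta_t^{-(\gamma+1)}+t\big)$.

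Collecting the two contributions with $\delta_t=t^{\beta_1}$ yields $|D(t)-D(0)|\le Ct^{\beta}$ with $\beta=\min\{\beta_1,(2\omega-1)\beta_1,\gamma(1-\tfrac{\theta}{2})\}>0$, and the linear dependence on $\|f\|_{H^{1/2}(\der\om)}\|g\|_{H^{1/2}(\der\om)}$ is carried through all the estimates exactly as in the preceding propositions. I expect the main obstacle to be precisely the interaction, near the moving vertex $P_1$, between the blow-up of $\nabla u_t$ and $\nabla v_t$ and the change of the integration curve: the mechanism that resolves it is the pair of equivalent representations of the integrand (exterior on the portion of $\partial T^t$ outside $T^0$, interior on the portion inside), which allows Proposition \ref{pointwisegrad} to be applied with the correct trace on each piece, while the weight $\mathrm{dist}^{\omega-1}$ with $\omega>\tfrac12$ guarantees integrability of both the vertex arcs and the point-mismatch errors.
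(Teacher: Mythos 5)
Your proposal is correct and follows essentially the same route as the paper's proof: the same splitting into vertex arcs inside $\mathcal{L}$ with $\delta_t=t^{\beta_1}$ (estimated by $C\delta_t^{2\omega-1}$ via Proposition \ref{BellFriedIs}) and a bulk part compared through homologous points on the two moving sides, with the curve mismatch controlled by the H\"older bound of Proposition \ref{LiNirenberg} at cost $t^{\gamma}\delta_t^{-(\gamma+1)}$, the geometric factors $M_t$, $n_t$, $\Phi_t^{\vec V}$ at cost $O(t)$, and the gradient differences by Proposition \ref{pointwisegrad}, arriving at the same exponent $\beta=\min\{\beta_1,(2\omega-1)\beta_1,\gamma(1-\tfrac{\theta}{2})\}$. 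The one place where you go beyond the paper is the explicit use of the transmission-condition identity $M_t\nabla u_t^e\cdot\nabla v_t^e=M_t^{-1}\nabla u_t^i\cdot\nabla v_t^i$ to switch to interior traces on $(\partial T^t\cap T^0)\setminus\mathcal{L}$: the paper writes the whole bulk comparison (its terms $A_1$, $A_2$) in terms of exterior gradients along $P_1^0P_2^0\setminus\mathcal{L}$, even though Proposition \ref{pointwisegrad} only controls \emph{interior} differences on the portion of one boundary lying inside the other triangle, so that region is treated somewhat implicitly there. Your exterior/interior bookkeeping makes that step airtight while changing nothing in the final rate.
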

\begin{proof}
Let $\mathcal{L}=\bigcup_{j=1}^3\left(B(P_j^0,\delta_t)\cup B(P_j^t,\delta_t)\right)$ with $\delta_t=t^{\beta_1}$ with $\beta_1$ defined as in Lemma \ref{sidederivative}.  Let us consider
\begin{eqnarray}\label{difference}
D(t)-D(0)\!\!\!\!&=&\!\!\!\!\int_{\partial T^t\backslash\mathcal{L}}\!\!M_t\nabla u_t^e\cdot\nabla v_t^e(\Phi_t^{\vec V}\cdot  n_t)d\sigma-\int_{\partial T^0\backslash\mathcal{L}}\!\!M_0\nabla u_0^e\cdot\nabla v_0^e(\Phi_0^{\vec V}\cdot  n_0)d\sigma\nonumber\\
&+&\!\!\!\!\int_{\partial T^t\cap\mathcal{L}}\!\!M_t\nabla u_t^e\cdot\nabla v_t^e(\Phi_t^{\vec V}\cdot  n_t)d\sigma-\int_{\partial T^0\cap\mathcal{L}}\!\!M_0\nabla u_0^e\cdot\nabla v_0^e(\Phi_0^{\vec V}\cdot  n_0)d\sigma\\
&=&\!\!\!\! J_1-J_2+J_3-J_4\nonumber
\end{eqnarray}
We assume without loss of generality that $t>0$. Let us start estimating $J_3$ and $J_4$. We accomplish this proceeding with similar calculations as in the estimation of $I_1$  in Lemma \ref{sidederivative}. In fact, we have 
\begin{equation}\label{estI3I4}
|J_3|,|J_4|\leq C \delta_t^{(2\omega-1)}\leq Ct^{(2\omega-1)\beta_1}.
\end{equation}
Let us now estimate 
\[
J_1-J_2=\int_{\partial T^t\backslash\mathcal{L}}M_t\nabla u_t^e\cdot\nabla v_t^e(\Phi_t^{\vec V}\cdot  n_t)d\sigma-\int_{\partial T^0\backslash\mathcal{L}}M_0\nabla u_0^e\cdot\nabla v_0^e(\Phi_0^{\vec V}\cdot  n_0)d\sigma.
\]
From this last difference we just consider and estimate 
\[
\int_{P_1^tP_2^0\backslash\mathcal{L}}M_t\nabla u_t^e\cdot\nabla v_t^e(\Phi_t^{\vec V}\cdot  n_t)d\sigma-\int_{P_1^0P_2^0\backslash\mathcal{L}}M_0\nabla u_0^e\cdot\nabla v_0^e(\Phi_0^{\vec V}\cdot  n_0)d\sigma
\]
since the terms on $P_1^tP_3^0$ and on $P_1^0P_3^0$ can be treated similarly. 
Let us evaluate, then,
\begin{eqnarray*}
&&\int_{P_1^tP_2^0\backslash\mathcal{L}}M_t\nabla u_t^e\cdot\nabla v_t^e(\Phi_t^{\vec V}\cdot  n_t)d\sigma-\int_{P_1^0P_2^0\backslash\mathcal{L}}M_0\nabla u_0^e\cdot\nabla v_0^e(\Phi_0^{\vec V}\cdot  n_0)d\sigma\\
&=&\int_{P_1^tP_2^0\backslash\mathcal{L}}M_t\nabla u_t^e\cdot\nabla v_t^e(\Phi_t^{\vec V}\cdot  n_t)d\sigma-\int_{P_1^0P_2^0\backslash\mathcal{L}}M_t\nabla u_t^e\cdot\nabla v_t^e(\Phi_t^{\vec V}\cdot  n_t)d\sigma\\
&+&\int_{P_1^0P_2^0\backslash\mathcal{L}}M_t\nabla u_t^e\cdot\nabla v_t^e(\Phi_t^{\vec V}\cdot  n_t)d\sigma-\int_{P_1^0P_2^0\backslash\mathcal{L}}M_0\nabla u_0^e\cdot\nabla v_0^e(\Phi_0^{\vec V}\cdot  n_0)d\sigma\\
&=:&A_1+A_2.
\end{eqnarray*}
We have
\begin{eqnarray*}
A_2&=&\int_{P_1^0P_2^0\backslash\mathcal{L}}\left\{M_t\nabla u_t^e\cdot\nabla v_t^e(\Phi_t^{\vec V}\cdot  n_t)d\sigma-M_0\nabla u_0^e\cdot\nabla v_0^e(\Phi_0^{\vec V}\cdot  n_0)\right\}d\sigma\\
&=&\int_{P_1^0P_2^0\backslash\mathcal{L}}\left\{(M_t-M_0)\nabla u_t^e\cdot\nabla v_t^e(\Phi_t^{\vec V}\cdot  n_t)+\right.\\
&&\hphantom{aaaa}+\left.M_0(\nabla u_t^e\cdot\nabla v_t^e(\Phi_t^{\vec V}\cdot  n_t)-\nabla u_0^e\cdot\nabla v_0^e(\Phi_0^{\vec V}\cdot  n_0))\right\}d\sigma\\
&=&\int_{P_1^0P_2^0\backslash\mathcal{L}}(M_t-M_0)\nabla u_t^e\cdot\nabla v_t^e(\Phi_t^{\vec V}\cdot  n_t)d\sigma\\
&&+\int_{P_1^0P_2^0\backslash\mathcal{L}}M_0(\nabla u_t^e\cdot\nabla v_t^e)(\Phi_t^{\vec V}\cdot  n_t-\Phi_0^{\vec V}\cdot  n_0)d\sigma\\
&&+ \int_{P_1^0P_2^0\backslash\mathcal{L}}M_0(\nabla u_t^e\cdot\nabla v_t^e-\nabla u_0^e\cdot\nabla v_0^e)(\Phi_0^{\vec V}\cdot  n_0)d\sigma.
\end{eqnarray*}
Using now the following estimates
\begin{eqnarray}\label{usefulbounds}
|M_t-M_0|, |\Phi_t^{\vec V}\cdot  n_t-\Phi_0^{\vec V}\cdot  n_0|&\leq&Ct,\nonumber\\
|\Phi_t^{\vec V}(Q_1)-\Phi_t^{\vec V}(Q_2)|, |\Phi_0^{\vec V}(Q_1)-\Phi_0^{\vec V}(Q_2)|&\leq&C|Q_1-Q_2|\\
|M_0|, |\Phi_0^{\vec V}\cdot  n_0|,  |\Phi_t^{\vec V}\cdot  n_t|&\leq&C\nonumber
\end{eqnarray}
where $C$ is independent on $t$, and the fact that $|\nabla u_0^e|, |\nabla v_0^e|\in L^1$ and (\ref{reminderbound}), we obtain
\begin{eqnarray*}
|A_2|&\leq& Ct\int_{P_1^0P_2^0\backslash\mathcal{L}}|\nabla u_t^e||\nabla v_t^e|+C\int_{P_1^0P_2^0\backslash\mathcal{L}}(\nabla u_t^e\cdot\nabla v_t^e-\nabla u_0^e\cdot\nabla v_0^e)\\
&\leq& Ct\int_{P_1^0P_2^0\backslash\mathcal{L}}(|\nabla u_0^e|+t^{\beta_1})(|\nabla v_0^e|+t^{\beta_1})\\
&+&C\int_{P_1^0P_2^0\backslash\mathcal{L}}(|R_1||\nabla v_0^e(x(y),y)|+|R_2||\nabla u_0^e(x(y),y)|+|R_1||R_2|)\\
&\leq&C\left(t+\frac{t^{\gamma}}{\delta_t^{\gamma+1}}+\left(\frac{t^{\gamma}}{\delta_t^{\gamma+1}}\right)^2\right).
\end{eqnarray*}
Finally, recalling the definition of  $\delta_t$  we have
\[
|A_2|\leq Ct^{\gamma(1-\frac{\theta}{2})}.
\]
Let us now consider
\[
|A_1|=\left|\int_{P_1^tP_2^0\backslash\mathcal{L}}M_t\nabla u_t^e\cdot\nabla v_t^e(\Phi_t^{\vec V}\cdot n_t)d\sigma-\int_{P_1^0P_2^0\backslash\mathcal{L}}M_t\nabla u_t^e\cdot\nabla v_t^e(\Phi_t^{\vec V}\cdot  n_t)d\sigma\right|.
\]
Let us set 
\[
F_t=M_t\nabla u_t^e\cdot\nabla v_t^e(\Phi_t^{\vec V}\cdot  n_t).
\]
 Then, assuming without loss of generality that $P^0_1=(0,0)$ and $P^0_2=(-x_2,0)$ with $x_2>0$, we can write, for a suitable positive $c$ and $c_1$,
\[
\int_{P_1^tP_2^0\backslash\mathcal{L}}F_td\sigma=\int_{x_2+\frac{\delta_t}{c}}^{-\frac{\delta_t}{c}}F_t(x_t(\eta),y_t(\eta))\sqrt{(x'_t(\eta))^2+(y'_t(\eta))^2}d\eta
\]
and we consequently obtain
\begin{eqnarray*}
|A_1|&=&\left |\int_{x_2+\frac{\delta_t}{c}}^{-\frac{\delta_t}{c}}F_t(x_t(\eta),y_t(\eta))\sqrt{(x'_t(\eta))^2+(y'_t(\eta))^2}d\eta\right.
\\&&\left.\hphantom{aaaa}-\int_{x_2+\frac{\delta_t}{c}}^{-\frac{\delta_t}{c}}F_t(\eta,0)d\eta+\int_{I_{\delta_t}}F_t(\eta,0)d\eta\right|
\end{eqnarray*}
where $I_{\delta_t}=[-\frac{\delta_t}{c_1},-\frac{\delta_t}{c}]\cup [x_2+\frac{\delta_t}{c_1}, x_2+\frac{\delta_t}{c}]$. Note now, that by Proposition \ref{LiNirenberg} and estimates (\ref{usefulbounds}), we have
\[
|F_t(x_t(\eta),y_t(\eta))-F_t(\eta,0)|\leq \frac{C}{\delta_t^{2(\gamma+1)}}t^{\gamma}\leq Ct^{\gamma(1-\frac{\theta}{2})}
\]
and also by using Proposition \ref{BellFriedIs} we have
\[
\left|\int_{I_{\delta_t}}F_t(\eta,0)d\eta\right|\leq C\delta_t^{2\omega-1}\leq Ct^{\beta_1(2\omega-1)}.
\]
In conclusion, we derive an estimate for $J_1-J_2$ 
\begin{equation}\label{estI1I2}
|J_1-J_2|\leq Ct^{\beta}
\end{equation}
where $\beta=\min(\beta_1,\beta_1(2\omega-1), \gamma(1-\frac{\theta}{2}))$. Insertion of (\ref{estI1I2}) and (\ref{estI3I4}) into (\ref{difference}) concludes the proof.
\end{proof}

We are now ready to prove our main result.

\subsection*{Proof of Theorem \ref{MainResult}.}
We proceed in two steps. First we prove the claim for $\vec V=(\vec V_1,\vec 0,\vec 0)$ where 
$\vec V_1=(v_1,v_2)$. Assume without loss of generality that $P_1^0=(0,0)$ and $P_2^0=(-x_2,0)$ with $x_2>0$ and $P_3^0=(x_3,y_3)$ with $y_3<0$. Let $T^t$ be the triangle of vertices $P^t=P^0+t\vec V$ and let $\overline{P}^t_1$ denote the intersection of the side $P_1^tP_3^0$ with the side $P_1^0P_2^0$ axis. Assume without loss of generality $t>0$. We now decompose the displacement from $T^0$ to $T^t$ as the superposition of the dispalcement from $T^0$ to $\overline{T}^t$ of vertices $\overline{P}^t=(\overline{P}^t_1,P_2^0,P_3^0)$ and the one from $\overline{T}^t$ to $T^t$.
By Lemma \ref{sidederivative} applied in the direction of $\overline{P}^t-P^0=(\vec w^t_1,\vec 0,\vec 0)$ where $\vec w^t_1=-\lambda_t\vec e_1$ and $\lambda_t=v_2\frac{x_3-tv_1}{y_3-tv_2}-v_1$ and Remark 3.3 we get
\[
F(\overline{P}^t)-F(P^0)=(k-1)t\lambda_t\int_{\partial T^0}M_0\nabla u_0^e\cdot\nabla v_0^e(\Phi_0^{\vec W^0_1}\cdot  n_0)d\sigma+\sigma_1(t)
\]
where $\vec W_1^0=(-\vec e_1,\vec 0,\vec 0)$ and $\sigma_1(t)=o(t)$ as $t\rightarrow 0$.
Hence, also
\begin{equation}\label{firstdisplacement}
F(\overline{P}^t)-F(P^0)=(k-1)t\lambda_0\int_{\partial T^0}M_0\nabla u_0^e\cdot\nabla v_0^e(\Phi_0^{\vec W^0_1}\cdot  n_0)d\sigma+\sigma_1(t)
\end{equation}
with $\lambda_0=v_2\frac{x_3}{y_3}-v_1$. 

To compute $F(P^t)-F(\overline{P}^t)$ we now apply Lemma \ref{sidederivative} in the direction \\$\vec W_2^t=\overline{P}^t-P^t=(\vec w_2^t,\vec 0,\vec 0)$.
Then, again by Lemma \ref{sidederivative}, we can write
\begin{eqnarray*}
F(P^t)-F(\overline{P}^t)&=&F(\overline{P}^t+\mu_t\vec W_2^t)-F(\overline{P}^t)=\int_0^{\mu_t}\frac{d}{d\eta}F(\overline{P}^t+\eta\vec W_2^t)d\eta\\
&=&\int_0^{\mu_t}\left(\frac{d}{d\eta}F(\overline{P}^t+\eta\vec W_2^t)-\frac{d}{d\eta}F(\overline{P}^t+\eta\vec W_2^t)|_{\eta=0}\right)d\eta
\\&&+\mu_t\frac{d}{d\eta}F(\overline{P}^t+\eta\vec W_2^t)|_{\eta=0}
\end{eqnarray*}
and,  by Lemma \ref{continuity},
\[
\left|F(\overline{P}^t+\mu_t\vec W_2^t)-F(\overline{P}^t)-\mu_t\frac{d}{d\eta}F(\overline{P}^t+\eta\vec W_2^t)|_{\eta=0}\right|\leq Ct^{\beta}|\mu_t|\leq Ct^{\beta+1}
\]
where $\mu_t=|\overline{P}^t-P^t|=t|v_2|\sqrt{\left(\frac{x_3-tv_1}{y_3-tv_2}\right)^2+1}$ and we get
\begin{equation}\label{seconddisplacement}
F(P^t)-F(\overline{P}^t)=(k-1)t\mu_0\int_{\partial T^0}M_0\nabla u_0^e\cdot\nabla v_0^e(\Phi_0^{\vec W^0_2}\cdot n_0)d\sigma+r(t)
\end{equation}
where $|r(t)|\leq Ct^{\beta+1}$, $\mu_0=|v_2|\sqrt{\frac{x^2_3}{y^2_3}+1}$ and $\vec W^0_2=\frac{v_2}{|v_2|}\left(\frac{(\frac{x_3}{y_3},1)}{\sqrt{\frac{x_3^2}{y_3^2}+1}},\vec 0,\vec 0\right)$.

Finally, putting (\ref{firstdisplacement}) and (\ref{seconddisplacement}) together, we end up with the following formula
\begin{eqnarray*}
F(P^t)-F(P^0)&=&(k-1)t\left\{\lambda_0\int_{\partial T^0}M_0\nabla u_0^e\cdot\nabla v_0^e(\Phi_0^{\vec W^0_1}\cdot  n_0)d\sigma+\right.
\\&&\left.\hphantom{aaaa}+\mu_0\int_{\partial T^0}M_0\nabla u_0^e\cdot\nabla v_0^e(\Phi_0^{\vec W^0_2}\cdot  n_0)d\sigma\right\}+r(t)\\
&=&(k-1)t\int_{\partial T^0}M_0\nabla u_0^e\cdot\nabla v_0^e(\Phi_0^{\lambda_0\vec W_1^0+\mu_0\vec W_2^0}\cdot n_0)d\sigma+r(t)
\end{eqnarray*}
and since
\[
\lambda_0\vec W_1^0+\mu_0\vec W_2^0=\vec V
\]
the statement of step 1 follows.

Now we are left with the general case where we consider an arbitrary vector $\vec V=(\vec v_1,\vec v_2,\vec v_3)$. Let $P^t=P^0+t\vec V$; we consider now the displacement form $T^0$ to $T^t$ as superposition of three displacements. The first one for $T^0$ to $\overline{T}^t$ of vertices $\overline{P}^t=(P_1^0+t\vec v_1,P^0_2,P^0_3)$, the second one from $\overline{T}^t$ to $\tilde{T}^t$
of vertices $\tilde{P}^t=(P_1^0+t\vec v_1,P_2^0+t\vec v_2, P_3^0)$ and the third one from  $\tilde{T}^t$ to  $T^t$. We then split
\[
F(P^t)-F(P^0)=F(P^t)-F(\tilde{P}^t)+F(\tilde{P}^t)-F(\overline{P}^t)+F(\overline{P}^t)-F(P^0)=D_1+D_2+D_3
\]
The last term $D_3$ can be estimated using the result obtained in the first step of the proof. Indeed,
\[
D_3=(k-1)t\int_{\partial T^0}M_0\nabla u_0^e\cdot\nabla v_0^e(\Phi_0^{\vec V^0_1}\cdot  n_0)d\sigma+r(t)
\]
where $\vec V_1=(\vec v_1,\vec 0,\vec 0)$. For $D_2$ we proceed similarly as in step 1 decomposing the displacement from $\overline{T}^t$ to $\tilde{T}^t$ as superposition of the displacements along the sides obtaining
\[
D_2=(k-1)t\int_{\partial T^0}M_0\nabla u_0^e\cdot\nabla v_0^e(\Phi_0^{\vec V^0_2}\cdot  n_0)d\sigma+r(t)
\]
where  $\vec V_2=(\vec 0,\vec v_2,\vec 0)$ and analogously
\[
D_1=(k-1)t\int_{\partial T^0}M_0\nabla u_0^e\cdot\nabla v_0^e(\Phi_0^{\vec V^0_3}\cdot  n_0)d\sigma+r(t)
\]
where  $\vec V_3=(\vec 0,\vec 0, \vec v_3)$. Finally, summing up $D_1,D_2$ and $D_3$ we eventually get 
\[
F(P^t)-F(P^0)=(k-1)t\int_{\partial T^0}M_0\nabla u_0^e\cdot\nabla v_0^e(\Phi_0^{\vec V}\cdot  n_0)d\sigma+r(t)
\]
thus ending the proof.
\qed

\noindent
\begin{coro}\label{differenziabilita}
The map $P\to \Lambda_{\sigma_P}$ is differentiable.
\end{coro}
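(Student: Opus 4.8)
The plan is to read Corollary \ref{differenziabilita} as an immediate upgrade of the scalar estimate in Theorem \ref{MainResult} to an operator-norm statement, exploiting that the estimate is uniform both in the boundary data and in the direction of motion. Throughout, I regard $\Lambda_{\sigma_P}$ as an element of the Banach space $\mathcal{L}\big(H^{1/2}(\der\om),H^{-1/2}(\der\om)\big)$, and the object of interest as a map from $\mathcal{V}\subset\RR^2\times\RR^2\times\RR^2$ into this space.

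First I would introduce the candidate differential. For $\vec V\in\RR^2\times\RR^2\times\RR^2$ set
\[
L_{\vec V}(f,g)=(k-1)\int_{\der T^0}\big(M_0\nabla u_0^e[f]\cdot\nabla v_0^e[g]\big)(\Phi_0^{\vec V}\cdot n_0)\,d\sigma,
\]
where $u_0[f]$ and $v_0[g]$ solve (\ref{probP}) with data $f$ and $g$, depending linearly on their data. Since $\Phi_0^{\vec V}$ is the affine interpolation of the $\vec V_i$, the map $\vec V\mapsto\Phi_0^{\vec V}$ is linear, hence so is $\vec V\mapsto L_{\vec V}$; moreover the gradient bounds of Propositions \ref{LiNirenberg} and \ref{BellFriedIs} (which give $|\nabla u_0^e|\,|\nabla v_0^e|\in L^1(\der T^0)$ with control linear in $\|f\|_{H^{1/2}},\|g\|_{H^{1/2}}$) make $L_{\vec V}$ a bounded bilinear form. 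Thus $\langle\Lambda'_{\vec V}f,g\rangle:=L_{\vec V}(f,g)$ defines $\Lambda'_{\vec V}\in\mathcal{L}(H^{1/2},H^{-1/2})$, linear in $\vec V$.

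The key step is to convert the scalar bound into an operator bound. Because (\ref{shapederivative}) holds for every pair $f,g$ with a constant $C$ not depending on $f,g$, I would rewrite it as
\[
\big|\langle(\Lambda_{\sigma_{P^t}}-\Lambda_{\sigma_{P^0}})f,g\rangle-t\,L_{\vec V}(f,g)\big|\le C\,|t|^{1+\alpha}\|f\|_{H^{1/2}}\|g\|_{H^{1/2}},
\]
and take the supremum over $\|f\|_{H^{1/2}}=\|g\|_{H^{1/2}}=1$. Since $H^{-1/2}(\der\om)$ is the dual of $H^{1/2}(\der\om)$, this supremum is exactly the operator norm, yielding
\[
\Big\|\Lambda_{\sigma_{P^t}}-\Lambda_{\sigma_{P^0}}-t\,\Lambda'_{\vec V}\Big\|_{\mathcal{L}(H^{1/2},H^{-1/2})}\le C\,|t|^{1+\alpha}.
\]

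Finally I would pass from this one-parameter statement to genuine Fréchet differentiability, and here lies the only point needing care: in Theorem \ref{MainResult} the constants $C$ and $\alpha$ depend only on $L,d_0,\alpha_0,\alpha_1,k$, so they are uniform over all directions $\vec V$ and all base points in $\mathcal{V}$. Given a small increment $H=(H_1,H_2,H_3)$, I would set $\vec V=H/|H|$ and $t=|H|$, so that $P^0+H=P^0+t\vec V$; the operator bound above together with $t\,\Lambda'_{\vec V}=\Lambda'_H$ (linearity in $\vec V$) gives
\[
\Big\|\Lambda_{\sigma_{P^0+H}}-\Lambda_{\sigma_{P^0}}-\Lambda'_H\Big\|_{\mathcal{L}(H^{1/2},H^{-1/2})}\le C\,|H|^{1+\alpha}=o(|H|).
\]
As $H\mapsto\Lambda'_H$ is linear and bounded, this is precisely Fréchet differentiability of $P\mapsto\Lambda_{\sigma_P}$ at $P^0$, with differential $H\mapsto\Lambda'_H$. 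I expect no serious obstacle beyond recording the uniformity of the constants and the duality identification of the operator norm, since all the analytic work is already contained in Theorem \ref{MainResult}.
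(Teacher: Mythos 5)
Your proposal is correct and follows essentially the same route as the paper: the paper likewise defines the linear operator $\tilde L$ through the bilinear form $(f,g)\mapsto(k-1)\int_{\der T^0}(M_0\nabla u_0^e\cdot\nabla v_0^e)(\Phi_0^{\vec V}\cdot n_0)\,d\sigma$, restates \eqref{shapederivative} as the operator-norm bound \eqref{shapederivativeter} using that the constant is uniform in $f,g$, and concludes by linearity in $\vec V$. The only difference is the finishing step---the paper invokes continuity of $\tilde L$ in $P$ (Lemma \ref{continuity}), while you use the uniformity of $C$ and $\alpha$ over unit directions to pass from the one-parameter estimate to the Fr\'echet estimate directly---and both are legitimate ways to close the argument.
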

\begin{proof}
If we define the linear operator
\[\tilde{L}:H^{1/2}(\der\om)\to H^{-1/2}(\der\om)\]
by
\[<\tilde{L}(f),g>=(k-1)\int_{\der T^0}(M_0\nabla u_0^e\cdot\nabla v_0^e)(\Phi_0^{\vec V}\cdot n_0)\,d\sigma,\]
we can state \eqref{shapederivative} as
\begin{equation}\label{shapederivativeter}
\|\Lambda_{\sigma_{P_t}}-\Lambda_{\sigma_P}-t\tilde{L}\|_{\mathcal{L}(H^{1/2}(\der\om), H^{-1/2}(\der\om))}\leq Ct^{1+\alpha}.
\end{equation}
Since $\tilde{L}$ is linear in ${\vec V}$ and by continuity of $\tilde{L}$ with respect to $P$ (see Lemma \ref{continuity}), we actually obtained the differentiability of the map $P\to\Lambda_{\sigma_{P}}$.
\end{proof}

A similar result can be derived when considering, instead of the Dirichlet to Neumann map, the Neumann to Dirichlet map with suitable normalization conditions. In fact, consider the spaces
\[ 
H^{1/2}_{\diamond}(\partial\Omega)=\left\{f\in H^{1/2}(\partial\Omega): \int_{\partial\Omega}f=0\right\}
\]
and 
\[ 
H^{-1/2}_{\diamond}(\partial\Omega)=\{g\in H^{-1/2}(\partial\Omega): <g,1>=0\}.
\]
It is well known that the Dirichlet to Neumann map maps onto $H^{-1/2}_{\diamond}(\partial\Omega)$ and when restricted to $H^{1/2}_{\diamond}(\partial\Omega)$ it is injective and has bounded inverse. So, we can define the Neumann to Dirichlet map as the inverse of the Dirichlet to Neumann map restricted to $H^{1/2}_{\diamond}(\partial\Omega)$ i.e.
\begin{equation*}
    \mathcal{N}_{\sigma_P}: H^{-1/2}_{\diamond}\left(\der\om\right)\to H^{1/2}_{\diamond}\left(\der\om\right)
\end{equation*}
with
\[
  \mathcal{N}_{\sigma_P}=(\Lambda_{\sigma_P}|_{H^{1/2}_{\diamond}(\partial\Omega)})^{-1}.
\]
For $f,g \in H^{-1/2}_{\diamond}(\partial\Omega)$ we can define $\tilde{F}:\mathcal{V}\rightarrow \RR$ as follows
\[
\tilde{F}(P)=<g, \mathcal{N}_{\sigma_P}(f)>\quad\forall P\in \mathcal{V}.
\]
Consider now $P^0=(P_1^0,P_2^0,P_3^0) \in \mathcal{V}$ and the corresponding triangle  denoted by $T^0$. Let $\vec V=(\vec V_1,\vec V_2,\vec V_3)\in \RR^2\times\RR^2\times\RR^2$ be an arbitrary  vector and let $\Phi_0^{\vec V}: \RR^2\rightarrow\RR^2$ be the affine map that gives
\begin{equation*}
\Phi_0^{\vec V}(P_i^0)=\vec V_i,\,\,\,\, i=1,2,3.
\end{equation*}
Consider now $P^t=P^0+t\vec V=(P_1^t,P_2^t,P_3^t)$ for $t\in [0,1]$ and the triangle of vertices $P^t$ that we denote by $T^t$ and denote by $\tilde{G}(t)=\tilde{F}(P^t)$. 

Let $\sigma_0=1+(k-1)\chi_{T^0}$ and let 
$u_0\in H^1(\om)$ the unique weak solution to 
\begin{equation}\label{probPN}
    \left\{\begin{array}{rcl}
             \textrm{ div }(\sigma_{0}\nabla u_0) & = & 0\mbox{ in }\om, \\
             \frac{\partial u_0}{\partial \nu} & = & f \mbox{ on }\der\om,\\
             \int_{\der\om}u_0&=&0.
           \end{array}
    \right.
\end{equation}

Denote  by $v_0$ the solution of 
\begin{equation*}
    \left\{\begin{array}{rcl}
             \textrm{ div }(\sigma_0\nabla v_0) & = & 0\mbox{ in }\om, \\
              \frac{\partial v_0}{\partial \nu}& = & g \mbox{ on }\der\om,\\
                \int_{\der\om}v_0&=&0.
           \end{array}
    \right.
\end{equation*}
Finally, denote by $u_t:=u_{P^t}$ the solution of (\ref{probPN}) corresponding to $\sigma_t=1+(k-1)\chi_{T^t}$ and by $v_t$ the solution to 
\begin{equation*}
    \left\{\begin{array}{rcl}
             \textrm{ div }(\sigma_t\nabla v_t) & = & 0\mbox{ in }\om, \\
            \frac{\partial v_t}{\partial \nu}  & = & g \mbox{ on }\der\om,\\
            \int_{\der\om}v_t&=&0.
           \end{array}
    \right.
\end{equation*}
Let  
\[u_0^e=u_0|_{\om\backslash T^0},\,\,\,\, v_0^e=v_0|_{\om\backslash T^0}.\]

We have the following: 
\begin{teo}\label{MainResult2}  There exits two positive constants $C$ and $\alpha$ depending only on $L$, $d_0$, $\alpha_1$, $\alpha_0$, $k$, such that
\begin{equation*}
\begin{aligned}
\left|\tilde{G}(t)-\tilde{G}(0)-\vphantom{\int_{\der T^0}}\right.&\left.t(k-1)\int_{\der T^0}(M_0\nabla u_0^e\cdot\nabla v_0^e)(\Phi_0^{\vec V}\cdot n_0)\,d\sigma\right|\\
&\leq Ct^{1+\alpha}\|f\|_{H^{-1/2}(\der\om)}\|g\|_{H^{-1/2}(\der\om)},
\end{aligned}
\end{equation*}
for $|t|<d_0/(2|\vec V|)$, 
that implies
\begin{equation*}
\tilde{G}^\prime(0)=(k-1)\int_{\der T^0}(M_0\nabla u_0^e\cdot\nabla v_0^e)(\Phi_0^{\vec V}\cdot n_0)d\sigma.
\end{equation*}
\end{teo}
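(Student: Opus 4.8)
The plan is to obtain Theorem \ref{MainResult2} as a consequence of Theorem \ref{MainResult} — more precisely of the quantitative operator bound \eqref{shapederivativeter} — by exploiting the relation $\mathcal{N}_{\sigma_P}=(\Lambda_{\sigma_P}|_{H^{1/2}_\diamond(\der\om)})^{-1}$, rather than rerunning the whole apparatus of Section \ref{section3} for the Neumann problems. Throughout I write $\Lambda_t:=\Lambda_{\sigma_{P^t}}|_{H^{1/2}_\diamond(\der\om)}$, which is a bounded isomorphism onto $H^{-1/2}_\diamond(\der\om)$ with bounded inverse $\mathcal{N}_t:=\mathcal{N}_{\sigma_{P^t}}$, and set $\Lambda_0,\mathcal{N}_0$ accordingly. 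Restricting \eqref{shapederivativeter} to the $\diamond$-spaces gives $\Lambda_t=\Lambda_0+t\tilde L+E(t)$ with $\|E(t)\|\le Ct^{1+\alpha}$, and in particular $\|\Lambda_t-\Lambda_0\|=O(t)$.

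First I would control $\mathcal{N}_t$ uniformly for small $t$. Since $\Lambda_0$ is boundedly invertible and $\|\mathcal{N}_0(\Lambda_t-\Lambda_0)\|\le Ct<1$ for $|t|$ small, a Neumann-series argument gives $\mathcal{N}_t=\bigl(I+\mathcal{N}_0(\Lambda_t-\Lambda_0)\bigr)^{-1}\mathcal{N}_0$, hence $\|\mathcal{N}_t\|\le2\|\mathcal{N}_0\|$ and $\|\mathcal{N}_t-\mathcal{N}_0\|=O(t)$. Then from the inverse-difference identity
\[
\mathcal{N}_t-\mathcal{N}_0=-\mathcal{N}_t(\Lambda_t-\Lambda_0)\mathcal{N}_0=-t\,\mathcal{N}_0\tilde L\mathcal{N}_0-t(\mathcal{N}_t-\mathcal{N}_0)\tilde L\mathcal{N}_0-\mathcal{N}_t E(t)\mathcal{N}_0,
\]
the last two terms are $O(t^2)$ and $O(t^{1+\alpha})$ respectively, which yields the operator estimate
\[
\bigl\|\mathcal{N}_t-\mathcal{N}_0+t\,\mathcal{N}_0\tilde L\mathcal{N}_0\bigr\|_{\mathcal{L}(H^{-1/2}_\diamond,\,H^{1/2}_\diamond)}\le Ct^{1+\alpha}.
\]

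Pairing this with $f,g\in H^{-1/2}_\diamond(\der\om)$ and using $|\langle g,\cdot\rangle|\le\|g\|_{H^{-1/2}(\der\om)}\|\cdot\|_{H^{1/2}(\der\om)}$ together with $\|\mathcal{N}_0\|<\infty$ produces exactly the error bound $Ct^{1+\alpha}\|f\|_{H^{-1/2}(\der\om)}\|g\|_{H^{-1/2}(\der\om)}$ of the statement, with leading term $-t\langle g,\mathcal{N}_0\tilde L\mathcal{N}_0 f\rangle$. To identify this term I would use the self-adjointness of $\mathcal{N}_0$ to write $\langle g,\mathcal{N}_0\tilde L\mathcal{N}_0 f\rangle=\langle\tilde L\hat f,\hat g\rangle$ with $\hat f:=\mathcal{N}_0 f$, $\hat g:=\mathcal{N}_0 g\in H^{1/2}_\diamond(\der\om)$. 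The key observation is that the \emph{Dirichlet} solution with boundary datum $\hat f=\mathcal{N}_0 f$ coincides, by uniqueness for \eqref{probP}, with the \emph{Neumann} solution $u_0$ of \eqref{probPN} with datum $f$, and likewise the Dirichlet solution with datum $\hat g$ equals the Neumann solution $v_0$; here one uses that the inclusion lies well inside $\om$, so $\sigma_0\equiv1$ near $\der\om$ and the conormal derivative reduces to the normal derivative there. Hence, by the very definition of $\tilde L$ in Corollary \ref{differenziabilita}, $\langle\tilde L\hat f,\hat g\rangle=(k-1)\int_{\der T^0}(M_0\nabla u_0^e\cdot\nabla v_0^e)(\Phi_0^{\vec V}\cdot n_0)\,d\sigma$ with $u_0,v_0$ now the Neumann solutions, so that $\tilde G'(0)$ equals the boundary integral of Corollary \ref{remnuovo1} evaluated on the Neumann solutions, the overall sign being determined by the inverse-derivative identity $\tfrac{d}{dt}\Lambda_t^{-1}=-\Lambda_t^{-1}\Lambda_t'\Lambda_t^{-1}$.

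The analysis is essentially routine once \eqref{shapederivativeter} is in hand: all the genuine difficulty of the problem is already concentrated in Theorem \ref{MainResult}, and there is no new analytic obstacle here. The only points requiring care are the bookkeeping in the inversion step — keeping the constants uniform in $t$ (via the Neumann-series bound on $\mathcal{N}_t$) so that the rate $t^{1+\alpha}$ survives the passage to the inverse — and the correct tracking of the sign produced by differentiating the inverse. As an alternative, one could avoid the operator approach entirely and repeat the arguments of Propositions \ref{energy}--\ref{pointwisegrad} and Lemmas \ref{sidederivative}--\ref{continuity} verbatim for Neumann data, starting from the Alessandrini-type identity $\tilde G(t)-\tilde G(0)=\int_\om(\sigma_0-\sigma_t)\nabla u_t\cdot\nabla v_0$; this works because the interior regularity and vertex-asymptotics of Propositions \ref{LiNirenberg}--\ref{BellFriedIs} depend only on $\|u\|_{L^2(\om)}$ and are insensitive to the type of boundary condition imposed on $\der\om$.
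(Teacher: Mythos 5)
Your route is genuinely different from the paper's. The paper disposes of Theorem \ref{MainResult2} in a few lines: it observes that the regularity results of Section \ref{section2} and the whole scheme of Section \ref{section3} (Propositions \ref{energy}--\ref{pointwisegrad}, Lemmas \ref{sidederivative}--\ref{continuity}) apply verbatim to Neumann solutions, and replaces Alessandrini's identity by its Neumann-to-Dirichlet analogue --- i.e.\ it is your ``alternative'' closing remark, not your main argument. Your main argument instead deduces the theorem abstractly from the operator form \eqref{shapederivativeter} of the Dirichlet result: the factorization $\mathcal{N}_t=\bigl(I+\mathcal{N}_0(\Lambda_t-\Lambda_0)\bigr)^{-1}\mathcal{N}_0$ with a Neumann series giving $\|\mathcal{N}_t\|\le 2\|\mathcal{N}_0\|$ uniformly, the inverse-difference identity, the $O(t^2)$ and $O(t^{1+\alpha})$ bookkeeping, and the identification (by uniqueness for \eqref{probP}, using $\sigma_0\equiv 1$ near $\der\om$) of the Dirichlet solution with datum $\mathcal{N}_0f$ with the Neumann solution with datum $f$. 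All of these steps are correct and quantitative, so the rate $t^{1+\alpha}$ does survive inversion; this is an attractive shortcut that reuses Corollary \ref{differenziabilita} instead of rerunning the hard analysis.

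There is, however, a genuine defect in your conclusion: the sign. Your own computation produces the leading term $-t\langle g,\mathcal{N}_0\tilde L\mathcal{N}_0f\rangle=-t(k-1)\int_{\der T^0}(M_0\nabla u_0^e\cdot\nabla v_0^e)(\Phi_0^{\vec V}\cdot n_0)\,d\sigma$ (with $u_0,v_0$ the Neumann solutions), i.e.\ the \emph{negative} of the formula in the statement, yet you assert that $\tilde G'(0)$ ``equals the boundary integral of Corollary \ref{remnuovo1}''; the vague clause about the sign ``being determined by'' $\frac{d}{dt}\Lambda_t^{-1}=-\Lambda_t^{-1}\Lambda_t'\Lambda_t^{-1}$ leaves the write-up claiming both signs at once, so as written it does not prove the printed statement. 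In fact the minus sign is unavoidable, and your method exposes a sign slip in the paper itself. Take $f=g$, $k>1$, and $\vec V_i=P_i^0-B$ ($B$ the barycenter), so that $T^t\supset T^0$ and $\Phi_0^{\vec V}\cdot n_0>0$: since $\langle f,\mathcal{N}_\sigma f\rangle=\max_{v\in H^1(\om)}\bigl(2\langle f,v|_{\der\om}\rangle-\int_\om\sigma|\nabla v|^2\bigr)$ is nonincreasing in $\sigma$, we get $\tilde G(t)\le\tilde G(0)$, hence $\tilde G'(0)\le 0$, whereas the stated $(k-1)$-formula is $\ge 0$ (as $M_0$ is positive definite) and generically positive. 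Consistently, the correct Neumann analogue of Alessandrini's identity is $\tilde G(t)-\tilde G(0)=\int_\om(\sigma_0-\sigma_t)\nabla u_t\cdot\nabla v_0$ --- the sign you yourself use in your fallback route --- while the paper's proof sketch carries over $(\sigma_t-\sigma_0)$ from the Dirichlet case, which is exactly where the statement's sign comes from. So the correct version of Theorem \ref{MainResult2} (both the $O(t^{1+\alpha})$ estimate and the derivative formula) has $(1-k)$ in place of $(k-1)$; a sound write-up must prove and state that corrected formula explicitly rather than paper over the discrepancy.
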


The proof of Theorem \ref{MainResult2} can be derived similarly as the one of Theorem \ref{MainResult} observing that all the preliminary results of Section 2 continue to hold also for solutions to the Neumann problem and that
\begin{equation*}
\frac{\tilde{G}(t)-\tilde{G}(0)}{t}=\frac{1}{t}<g,(\mathcal{N}_{t}-\mathcal{N}_{0})(f)>=\frac{1}{t}\int_{\om}(\sigma_t-\sigma_0)\nabla u_t\cdot\nabla v_0
\end{equation*}

\section{A shape derivative approach to an optimization problem}\label{section4}
We now want to see some interesting consequences of  Theorem \ref{MainResult2}.

Let $u_P\in H^1(\om)$ be the unique weak solution to 
\begin{equation*}
    \left\{\begin{array}{rcl}
             \textrm{ div }(\sigma_P\nabla u_P) & = & 0\mbox{ in }\om, \\
             \frac{\partial u_P}{\partial \nu} & = & f\mbox{ on }\der\om,\\
             \int_{\der\om}u_P&=&0
           \end{array}
    \right.
\end{equation*}
corresponding to conductivity $\sigma_P=1+(k-1)\chi_{T^P}$ and let $u|_{\partial\om}=u_{meas}$. It is well known by the results obtained in \cite{BFS} that under suitable choice of $f$ the datum 
 $u_{meas}$ uniquely determines the triangle $T^P\subset \Omega$.  

Let $T^0$ of vertices $P^0$ be an initial guess close enough to the exact solution $T^P$.
Let  $\sigma_t=1+(k-1)\chi_{T^t}$  where $T^t$ has vertices $P^t=P^0+t\vec V=(P_1^t,P_2^t,P_3^t)$ and $u_t$ is the solution to (\ref{probPN})  for  $\sigma_t=1+(k-1)\chi_{T^t}$. 
 
Consider the following boundary functional
\[
\mathcal{J}(T^t)=\frac{1}{2}\int_{\partial\om}(u_t-u_{meas})^2
\]
and consider the optimization problem of minimizing the functional with respect to all possible affine motions of the triangle $T^0$. It is well known that in order to solve this minimization problem we might use an iterative procedure like for example Newton's method which involves the computation of shape derivative of the functional $\mathcal{J}(T^t)$ (see  \cite{ABKFL}, \cite{ADKSTVZ}, \cite{DZ} and \cite{SZ}).


We now will derive a formula for the shape derivative of the functional $\mathcal{J}(T^t)$  defined as
\[
D\mathcal{J}(T^0)[\vec V]=\lim_{t\rightarrow 0}\frac{\mathcal{J}(T^t)-\mathcal{J}(T^0)}{t}.
\]
For, we will show the following preliminary result.
\begin{lm}\label{wt}
There exists positive constants $C$ and $\beta>0$ such that
\begin{equation}\label{ot}
\|u_t-u_0\|_{L^2(\partial\om)}\leq Ct^{1+\beta},
\end{equation}
for $|t|<d_0/(2|\vec V|)$.
\end{lm}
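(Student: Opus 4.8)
The plan is to estimate the Dirichlet trace of $w_t:=u_t-u_0$ by a duality argument, thereby transferring the boundary norm to a bulk integral supported on the thin set $T^t\triangle T^0$. First I would record that $w_t$ solves $\dive(\sigma_t\nabla w_t)=\dive((\sigma_0-\sigma_t)\nabla u_0)$ in $\om$ with homogeneous Neumann data on $\der\om$ and $\int_{\der\om}w_t=0$, so that by $L^2$ duality
\[
\|w_t\|_{L^2(\der\om)}=\sup\left\{\int_{\der\om}w_t\,g\ :\ g\in H^{-1/2}_{\diamond}(\der\om),\ \|g\|_{L^2(\der\om)}=1\right\}.
\]
For each competitor $g$ I let $v_t$ be the solution of the Neumann problem with flux $g$ and conductivity $\sigma_t$. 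Testing the weak formulation of $v_t$ against $w_t$ and that of $w_t$ against $v_t$, and using that $\sigma\equiv 1$ near $\der\om$, I would obtain the identity
\[
\int_{\der\om}w_t\,g=\int_{T^t\triangle T^0}(\sigma_0-\sigma_t)\nabla u_0\cdot\nabla v_t ,
\]
so that the whole estimate is reduced to the slab $T^t\triangle T^0$, which has area $O(t)$ and sits at distance $\geq d_0$ from $\der\om$.

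Next I would estimate the right-hand side uniformly in $g$. Introducing the vertex neighborhood of radius $\delta_t=t^{\beta_1}$ as in Proposition \ref{pointwisegrad}, I would split $T^t\triangle T^0$ into its intersection with the vertex balls and its complement. On the vertex part I would use the blow-up bounds (\ref{gradest3})--(\ref{gradest4}) for both $\nabla u_0$ and $\nabla v_t$, which, exactly as in the estimate of $I_1$ in Lemma \ref{sidederivative} and of $J_3,J_4$ in Lemma \ref{continuity}, yields a contribution of order $\delta_t^{2\omega-1}=t^{\beta_1(2\omega-1)}$. Away from the vertices I would replace $\nabla v_t$ by $\nabla v_0$ using the pointwise estimate (\ref{pointwisest1})--(\ref{pointwisest2}), namely $\nabla v_t=\nabla v_0+O(t^{\beta_1})$, and combine it with the Hölder control of $\nabla u_0,\nabla v_0$ on the interface coming from Proposition \ref{LiNirenberg}; since the strip along each side has width $O(t)$, the product of the $O(t)$ measure with the vanishing gradient differences is what carries the extra power beyond linear. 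Taking the supremum over $\|g\|_{L^2}=1$, and noting $\|g\|_{H^{-1/2}}\le C\|g\|_{L^2}$ so that all constants stay uniform, would then produce $\|w_t\|_{L^2(\der\om)}\le Ct^{1+\beta}$ after balancing $\delta_t=t^{\beta_1}$ as in Section \ref{section2}.

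The main obstacle is precisely the passage from the immediate linear bound—which the area estimate $|T^t\triangle T^0|=O(t)$ together with merely bounded interface gradients already gives—to the super-linear exponent $1+\beta$. The crux is that the naive estimate of $\int_{T^t\triangle T^0}(\sigma_0-\sigma_t)\nabla u_0\cdot\nabla v_t$ is of order $t$, so the extra vanishing factor must be harvested from the fine structure of the solutions: off the vertex balls one must use that the relevant gradients are close in the sense of Proposition \ref{pointwisegrad}, while the genuinely singular behaviour at the three corners is absorbed into $t^{\beta_1(2\omega-1)}$ via the sharp asymptotics of Proposition \ref{BellFriedIs}. The delicate technical point, as in Propositions \ref{energy} and \ref{pointwisegrad}, is the simultaneous choice of $\delta_t$ that keeps the corner contribution and the strip contribution both strictly below $t^{1+\beta}$, together with the fact that, since $\der\om$ lies at distance $d_0$ from the inclusion, the harmonicity of $w_t$ near $\der\om$ lets the boundary trace be controlled without losing the gain obtained in the interior.
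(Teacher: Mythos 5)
Your reduction to the bulk identity is correct (it is exactly Alessandrini's identity for the Neumann problem, which the paper also records after Theorem \ref{MainResult2}), but the estimate you then claim --- that $\int_{T^t\triangle T^0}(\sigma_0-\sigma_t)\nabla u_0\cdot\nabla v_t=O(t^{1+\beta})$ \emph{uniformly} over all $g$ with $\|g\|_{L^2(\der\om)}=1$ --- is false, and the failure sits precisely in your ``away from the vertices'' step. After you write $\nabla v_t=\nabla v_0+O(t^{\beta_1})$, the error terms are indeed superlinear, but the principal term $\int_{(T^t\triangle T^0)\setminus\mathcal{L}}(\sigma_0-\sigma_t)\nabla u_0\cdot\nabla v_0$ does not vanish to higher order: the strip has measure of order $t$ and the integrand $\nabla u_0\cdot\nabla v_0$ is of order one there, so this term is genuinely of size $t$ (this is exactly the term that, in Lemma \ref{sidederivative}, produces the nonzero limit $I_3/t$). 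There is no ``vanishing gradient difference'' in it. Moreover, no refinement can close this gap, because the uniform statement you are after is incompatible with the main theorem itself: for a fixed $g$, Theorem \ref{MainResult2} gives $\int_{\der\om}(u_t-u_0)g = t\,(k-1)\int_{\der T^0}(M_0\nabla u_0^e\cdot\nabla v_0^e)(\Phi_0^{\vec V}\cdot n_0)\,d\sigma + O(t^{1+\alpha})$, and the first-order coefficient is in general nonzero; hence $\sup_{\|g\|_{L^2}=1}\int_{\der\om}(u_t-u_0)g=\|u_t-u_0\|_{L^2(\der\om)}$ is bounded \emph{below} by $ct$ for generic data, so a duality argument taken uniformly over unit $g$ can never do better than $O(t)$.

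The idea you are missing is the special choice of test function: the paper applies the expansion of Theorem \ref{MainResult2} (formula \eqref{keyformula}) with $g=u_t-u_0$ \emph{itself}. Then the left-hand side becomes $\int_{\der\om}(u_t-u_0)^2$, and the corresponding dual solution $w_t$ --- the solution of the $\sigma_0$-Neumann problem with flux $u_t-u_0$ --- is small: by the energy estimate and the trace inequality, $\|w_t\|_{H^1(\om)}\le C\|u_t-u_0\|_{L^2(\der\om)}\le C\|u_t-u_0\|_{H^1(\om)}\le Ct^{\theta}$ (Proposition \ref{energy}), whence, arguing as in Proposition \ref{pointwisegrad}, $\|\nabla w_t^e\|_{L^\infty(\der T^0\setminus\mathcal{L})}\le Ct^{\beta_1}$, while near the vertices Proposition \ref{BellFriedIs} gives the usual $\delta_t^{2\omega-1}=t^{\beta_1(2\omega-1)}$ contribution. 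It is this smallness of the dual solution's gradient --- available only for this particular $g$, not uniformly --- that produces the superlinear power, and it produces it for the \emph{square}: the argument yields $\int_{\der\om}(u_t-u_0)^2\le Ct^{1+\beta}$, equivalently $\|u_t-u_0\|_{L^2(\der\om)}\le Ct^{(1+\beta)/2}$, which is exactly what Proposition \ref{shapederivativefunctional} uses ($\int_{\der\om}(u_t-u_0)^2=o(t)$). The unsquared exponent $1+\beta$ displayed in \eqref{ot} is a slip in the paper's statement --- as noted above it would contradict the generically nonvanishing first-order term --- so you should not try to prove it literally; your proposal, by contrast, is structurally capped at $O(t)$ because your $v_t$ carries no smallness in $g$.
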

\begin{proof}
Let $w_t$ be the solution to the problem
\begin{equation*}
    \left\{\begin{array}{rcl}
             \textrm{ div }(\sigma_0\nabla w_t) & = & 0\mbox{ in }\om, \\
            \frac{\partial w_t}{\partial \nu}  & = & u_t-u_0 \mbox{ on }\der\om,\\
            \int_{\der\om}w_t&=&0.
           \end{array}
    \right.
\end{equation*}
From Theorem \ref{MainResult2} we have
\begin{eqnarray}\label{keyformula}
<g,(\mathcal{N}_{t}-\mathcal{N}_{0})(f)>&=&<g,u_t-u_0>\nonumber\\&=&(k-1)t\int_{\der T^0}(M_0\nabla u_0^e\cdot\nabla v_0^e)(\Phi_0^{\vec V}\cdot n_0)d\sigma+r(t)
\end{eqnarray}
for any $g\in H^{-1/2}(\partial\Omega)$ and where 
\begin{equation}\label{reminderbound2}
|r(t)|\leq Ct^{1+\beta}
\end{equation}
with $\beta=\min(\beta_1,\beta_1(2\omega-1), \gamma(1-\frac{\theta}{2}))$.
Choosing $g=u_t-u_0$ and since $u_t-u_0\in L^2(\partial\om)$ we can rewrite last relation in the form
\begin{equation}\label{L2norm}
\int_{\partial\om}(u_t-u_0)^2=(k-1)t\int_{\der T^0}(M_0\nabla u_0^e\cdot\nabla w_t^e)(\Phi_0^{\vec V}\cdot n_0)d\sigma+r(t).
\end{equation}
We now estimate the first term on the right hand side of (\ref{L2norm}). 
Observe that by the energy estimates 
\[
\|w_t\|_{H^1(\Omega)}\leq C\|u_t-u_0\|_{L^2(\partial\om)}\leq C\|u_t-u_0\|_{H^1(\om)}\leq Ct^{\theta}
\]
and arguing similary as in Proposition \ref{pointwisegrad} we get
\begin{equation}\label{gradwt}
\|\nabla w_t\|_{L^{\infty}(\der T^0\backslash\mathcal{L})}\leq C t^{\beta_1}
\end{equation}
We split
\begin{eqnarray*}
\int_{\der T^0}(M_0\nabla u_0^e\cdot\nabla w_t^e)(\Phi_0^{\vec V}\cdot n_0)d\sigma&=&\int_{\der T^0\backslash\mathcal{L}}(M_0\nabla u_0^e\cdot\nabla w_t^e)(\Phi_0^{\vec V}\cdot n_0)d\sigma\\
&&+\int_{\der T^0\cap\mathcal{L}}(M_0\nabla u_0^e\cdot\nabla w_t^e)(\Phi_0^{\vec V}\cdot n_0)d\sigma.
\end{eqnarray*}
By \eqref{gradwt} and by the fact that $|\nabla u_0^e|\in L^1$ we have
\[
\left|\int_{\der T^0\backslash\mathcal{L}}(M_0\nabla u_0^e\cdot\nabla w_t^e)(\Phi_0^{\vec V}\cdot n_0)d\sigma\right|\leq C\|\nabla w_t\|_{L^{\infty}(\der T^0\backslash\mathcal{L})}\int_{\der T^0\backslash\mathcal{L}}|\nabla u_0^e|\leq Ct^{\beta_1}.
\]
On the other hand proceeding as in Lemma \ref{continuity} we have that
\[
\left|\int_{\der T^0\cap\mathcal{L}}(M_0\nabla u_0^e\cdot\nabla w_t^e)(\Phi_0^{\vec V}\cdot n_0)d\sigma\right|\leq Ct^{\beta_2}=Ct^{\beta_1(2\omega-1)}.
\]

Hence, we have
\[
\left|t\int_{\der T^0}(M_0\nabla u_0^e\cdot\nabla w_t^e)(\Phi_0^{\vec V}\cdot n_0)d\sigma\right|\leq Ct^{1+\min(\beta_1,\beta_1(2\omega-1))}
\]
 and inserting last inequality and (\ref{reminderbound2}) into (\ref{L2norm}) we get (\ref{ot}).
\end{proof}
Let 
\begin{equation}\label{probw0}
    \left\{\begin{array}{rcl}
             \textrm{ div }(\sigma_0\nabla w_0) & = & 0\mbox{ in }\om, \\
            \frac{\partial w_0}{\partial \nu}  & = & u_0-u_{meas} \mbox{ on }\der\om,\\
            \int_{\der\om}w_0&=&0.
           \end{array}
    \right.
\end{equation}
We are now ready to prove
\begin{prop}\label{shapederivativefunctional}
\[
D\mathcal{J}(T^0)[\vec V]=(k-1)\int_{\der T^0}(M_0\nabla u_0^e\cdot\nabla w_0^e)(\Phi_0^{\vec V}\cdot n_0)d\sigma
\]
where $u_0$ denotes the solution to (\ref{probPN}) corresponding to $\sigma_{T^0}$ and $w_0$ the solution to (\ref{probw0}).
\end{prop}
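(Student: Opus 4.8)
The plan is to expand the quadratic boundary functional into a linear term plus a quadratic remainder, show that the remainder is negligible after division by $t$ by means of Lemma \ref{wt}, and then recognize the surviving linear term as an instance of the shape-derivative identity underlying Theorem \ref{MainResult2}, evaluated at a cleverly chosen boundary flux.

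First I would use the algebraic identity $a^2-b^2=(a-b)(a+b)$ with $a=u_t-u_{meas}$ and $b=u_0-u_{meas}$, so that $a-b=u_t-u_0$, and regroup $u_t+u_0-2u_{meas}=(u_t-u_0)+2(u_0-u_{meas})$. This gives the exact splitting
\[
\mathcal{J}(T^t)-\mathcal{J}(T^0)=\frac12\int_{\partial\om}(u_t-u_0)^2+\int_{\partial\om}(u_t-u_0)(u_0-u_{meas}).
\]
Dividing by $t$ separates the incremental ratio into a quadratic contribution and a linear contribution. The quadratic contribution is disposed of immediately: by Lemma \ref{wt} we have $\|u_t-u_0\|_{L^2(\partial\om)}\leq Ct^{1+\beta}$, whence
\[
\frac{1}{2t}\int_{\partial\om}(u_t-u_0)^2\leq \frac{C^2}{2}\,t^{1+2\beta}\longrightarrow 0
\]
as $t\to 0$, so it does not affect the limit.

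The crux is the linear term $\frac1t\int_{\partial\om}(u_t-u_0)(u_0-u_{meas})$, and here I would invoke the key identity \eqref{keyformula} established in the proof of Lemma \ref{wt}: for every admissible boundary flux $g$ one has $<g,u_t-u_0>=t(k-1)\int_{\der T^0}(M_0\nabla u_0^e\cdot\nabla v_0^e)(\Phi_0^{\vec V}\cdot n_0)\,d\sigma+r(t)$ with $|r(t)|\leq Ct^{1+\beta}$, where $v_0$ denotes the Neumann solution with flux $g$. Choosing $g=u_0-u_{meas}$ identifies the associated solution $v_0$ precisely with $w_0$ solving \eqref{probw0}; since $u_0-u_{meas}\in L^2(\partial\om)$, the duality pairing reduces to the boundary integral $\int_{\partial\om}(u_t-u_0)(u_0-u_{meas})$. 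Dividing by $t$ and letting $t\to0$, the remainder $r(t)/t\to0$ and the linear term converges to $(k-1)\int_{\der T^0}(M_0\nabla u_0^e\cdot\nabla w_0^e)(\Phi_0^{\vec V}\cdot n_0)\,d\sigma$, which is the claimed formula.

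The main obstacle is verifying that the choice $g=u_0-u_{meas}$ is admissible in the Neumann-to-Dirichlet framework, i.e. that it has zero average on $\partial\om$ so that \eqref{probw0} is well posed and Theorem \ref{MainResult2} applies to it. This holds because both $u_0$ and $u_{meas}=u_P|_{\partial\om}$ are normalized to vanish in boundary mean, so $\int_{\partial\om}(u_0-u_{meas})=0$ and $u_0-u_{meas}\in H^{-1/2}_{\diamond}(\partial\om)$. A secondary, purely technical point is that the remainder bound $|r(t)|\leq Ct^{1+\beta}$ must be used for this specific, fixed datum; since $\|u_0-u_{meas}\|_{H^{-1/2}(\partial\om)}$ is a fixed finite quantity, the constant $C$ remains independent of $t$ and the estimate passes cleanly to the limit.
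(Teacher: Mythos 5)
Your proposal is correct and follows essentially the same route as the paper: the same splitting $\mathcal{J}(T^t)-\mathcal{J}(T^0)=\int_{\partial\om}(u_0-u_{meas})(u_t-u_0)+\frac12\int_{\partial\om}(u_t-u_0)^2$, disposal of the quadratic term via Lemma \ref{wt}, and application of the identity \eqref{keyformula} with $g=u_0-u_{meas}$ so that the associated solution is $w_0$. Your explicit check that $u_0-u_{meas}$ has zero boundary mean (hence is an admissible Neumann datum) is a detail the paper leaves implicit, but it is the same argument.
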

\begin{proof}
Let us consider
\begin{eqnarray*}
\mathcal{J}(T^t)-\mathcal{J}(T^0)&=&\frac{1}{2}\int_{\partial\om}(u_t-u_{meas})^2-\frac{1}{2}\int_{\partial\om}(u_0-u_{meas})^2\\&=&\int_{\partial\om}(u_0-u_{meas})(u_t-u_0)+\frac{1}{2}\int_{\partial\om}(u_t-u_0)^2.
\end{eqnarray*}
Then,  by Lemma \ref{wt} and by (\ref{keyformula}) applied for $g=u_0-u_{meas}$ we get
\begin{eqnarray*}
\mathcal{J}(T^t)-\mathcal{J}(T^0)&=&\int_{\partial\om}(u_0-u_{meas})(u_t-u_0)+o(t)\\&=&(k-1)t\int_{\der T^0}(M_0\nabla u_0^e\cdot\nabla v_0^e)(\Phi_0^{\vec V}\cdot n_0)d\sigma+o(t).
\end{eqnarray*}
Finally, dividing last expression by $t$ and passing to the limit as $t\rightarrow 0$  the thesis follows.
\end{proof}

%
\bibliographystyle{plain}

\begin{thebibliography}{99}
\bibitem{ADK1} L. Afraites, M. Dambrine and D. Kateb, Shape methods for the transmission problem with a single measurement, {\it Numerical Functional Analysis and Optimization}, 28 (2007), 519--551.
\bibitem{ADK2} L. Afraites, M. Dambrine and D. Kateb,  On the second order shape optimization methods for electrical impedance tomography; {\it SIAM J. Control and Optimization}  47 (2008), 1556--1590.
\bibitem{ABKFL} H. Ammari,  E. Beretta, H. Kang, E. Francini and M. Lim, Optimization algorithm for reconstructing interface changes of a conductivity inclusion from modal measurements,  {\it  Math. of Comp. (AMS)}, 79 (2010), 1757--1777.
\bibitem{AKLZ} H. Ammari, H. Kang, M. Lim and H. Zibri, Conductivity interface problems. Part 1: small perturbations of an interface, \textit{Trans. AMS}, 362 (2010), 2435--2449.
\bibitem{ADKSTVZ} S. R. Arridge, O. Dorn, J.P. Kaipio, V. Kolehmainen, M. Schweiger, T. Tarvainen, M. Vauhkonen and A. Zacharopoulos,  Reconstruction of subdomain boundaries of piecewise constant coefficients of the radiative transfer equation from optical tomography data \textit{Inverse Problems} 22 (2006), no. 6, 2175--2196. 
\bibitem{BFI} H. Bellout, A. Friedman and V. Isakov, Stability for an inverse problem in potential theory, \textit{ Trans. AMS}, 332 (1992), 271--296.
\bibitem{BFS} B. Barcelo, E. Fabes and J. K. Seo,  The inverse conductivity problem with one measurement: uniqueness for convex polyhedra,\textit{ Proc. AMS},  122 (1994), 183--189.
\bibitem{BFdeHV} E. Beretta, E. Francini, M. V. de Hoop and S. Vessella, Stable determination of polyhedral interfaces from boundary data for the Helmholtz equation, \textit{Comm. Partial Differential Equations}, 40 (2015), 1365--1392.
\bibitem{C} M. Choulli, Local stability estimate for an inverse conductivity problem,  \textit{Inverse Problems}, 19 (2003), 895--907.
\bibitem{DZ} M.C. Delfour and J. P. Zolesio, \textit{Shapes and geometries: Metrics, Analysis Differential Calculus and Optimization}, SIAM, Philadelphia, 2001.
\bibitem{HR} F.Hettlich and W. Rundell, The determination of a discontinuity in a conductivity from a single boundary measurement,
{\it Inverse Problems}, 14 (1998), 67--82.
\bibitem{LN} Y. Y. Li and L. Nirenberg, Estimates for elliptic systems from composite materials, \textit{Comm. Pure Appl. Math.}, 56 (2003) 892--925.
\bibitem{LV} Y. Y. Li and M. Vogelius, Gradient estimates for solutions to divergence form elliptic equations with discontinuous coefficients, \textit{Archive Rat. Mech. Anal.},  153 (2000),  91--151.
\bibitem{SZ} J. Sokolowski and J.P. Zolezio, \textit{Introduction to shape optimization}, Springer, Berlin, 1992.
%
%
%
%
%




\end{thebibliography}
%

\end{document}